\documentclass{article}
\usepackage[utf8]{inputenc}
\usepackage{amsmath,amssymb,amsthm}
\usepackage{hyperref,graphicx,color}
\usepackage{enumerate}
\usepackage{a4wide}

\usepackage{todonotes}

\newtheorem{thm}{Theorem}
\newtheorem{lem}[thm]{Lemma}
\newtheorem{prop}[thm]{Proposition}
\newtheorem{cor}[thm]{Corollary}
\newtheorem{rem}[thm]{Remark}
\newcommand\eps{\ensuremath{\varepsilon}}

\title{Corrigendum to \emph{Ramification of multiple eigenvalues for the 
Dirichlet-Laplacian in perforated domains} published in Journal of Functional Analysis 283 (2022) 109718}
\author{Laura Abatangelo, Corentin Léna, Paolo Musolino}
\date{May 7, 2026}

\begin{document}

\maketitle

\begin{abstract}
    We fix the proof of \cite[Theorem 1.17]{JFA2022} which does not in general gives the correct coefficients in the asymptotic behavior of eigenvalues for the Dirichlet Laplacian when a small hole is removed from the domain.
\end{abstract}

\section{Introduction}
\label{sec:intro}

Let us recall the setting of \cite{JFA2022}. We fix an open, bounded and simply connected set $\Omega\subset \mathbb R^2$, of class $C^{1,\alpha}$ for some $\alpha>0$. We consider the eigenvalues $\{\lambda_j\}_{j\ge 1}$ of the Dirichlet Laplacian in $\Omega$ (counted with multiplicities and arranged in non-decreasing order). We call those \emph{unperturbed}. For our purpose, it is convenient to see them as the eigenvalues of the quadratic form
\begin{equation}
\label{eq:Q}
	Q(u)=\int_{\Omega}\left|\nabla u\right|^2\,dx
\end{equation}
on $H^1_0(\Omega)$, relative to the scalar product 
\begin{equation}
\label{eq:L2}
	\langle u,v\rangle=\int_{\Omega} u\,v\,dx
\end{equation}
on $L^2(\Omega)$ (we are considering real Hilbert spaces throughout). In other words, $u\in H^1_0(\Omega)$ is an eigenfunction of $Q$, associated with the eigenvalue $\lambda$, when
\begin{equation*}
 Q(u,\varphi)=\lambda\,\langle u,\varphi\rangle\quad\text{for all } \varphi\in H^1_0(\Omega)
\end{equation*} 
(everywhere in this note, with a slight abuse of notation, we use the same letter to denote a quadratic form and its associated symmetric bilinear form).

We fix another set $\omega\subset\mathbb R^2$ satisfying the same hypotheses as $\Omega$ and assume that $0\in\Omega$ and $0\in\omega$. Then, there exists $\eps_0>0$ such that $\eps\overline{\omega}\subset\Omega$ for all $\eps\in (0,\eps_0)$. For such an $\eps$, we define the \emph{perforated domain} $\Omega_\eps:=\Omega\setminus\eps\overline{\omega}$. We now consider the eigenvalues $\{\lambda_{j}^\eps\}_{j\ge1}$ of the Dirichlet Laplacian in $\Omega_\eps$, called \emph{perturbed}. They can be seen as the eigenvalues of the quadratic form  defined by Equation  \eqref{eq:Q} on the domain $H^1_0(\Omega_\eps)$, relative to the scalar product defined by Equation \eqref{eq:L2} on $L^2(\Omega_\eps)$. With a slight abuse of notation, we write $Q$ and $\langle\cdot\,,\cdot\rangle$ for all these quadratic forms and scalar products, although they act on different spaces. In addition, we write $\|f\|$ for the $L^2$-norm of a function $f$, whatever its domain of definition.

Let us note that any function defined on $\Omega_\eps$ can be extended by $0$ on $\eps \overline{\omega}$ to obtain a function defined on $\Omega$. This induces an isometric injection of $H^1_0(\Omega_\eps)$ (respectively $L^2(\Omega_\eps)$) into $H^1_0(\Omega)$ (respectively $L^2(\Omega)$), which allows us to identify the former space with a subspace of the latter. It then follows from the minmax principle that $\lambda_j\le\lambda_j^\eps$ for all $\eps\in(0,\eps_0)$ and $j\in\mathbb N$. 

In the rest of this note, we fix an unperturbed eigenvalue   $\lambda_N$ of multiplicity $m$ (with $N$ chosen so that $\lambda_{N-1}<\lambda_N$) and we denote by $E$ the associated eigenspace, which is by assumption an $m$-dimensional  subspace of $H_0^1(\Omega)$. We study various limits and asymptotic behaviors as $\eps\to0$. It is known that 
 \begin{equation}
 \label{eq:conv-ev}
 	\lambda_j^\eps\to \lambda_j 
 \end{equation}  
for all $j\in\mathbb N$ (see for instance \cite[Proposition 1.3]{JFA2022}) so that, in particular,
 \begin{equation*}
 	\lambda_{j+N-1}^\eps\to \lambda_N\quad \text{for all }1\le j\le m. 
 \end{equation*} 
We define the \emph{spectral shifts} by $\nu_j^\eps:=\lambda_{j}^\eps-\lambda_N$ for all $j\in\mathbb N\setminus\{0\}$. We want to understand the behavior of the vanishing spectral shifts, i.e. those for which $N\le j\le N+m-1$.  Our goal is the following theorem (which corresponds to Theorem 1.17 in \cite{JFA2022}).

\begin{thm}\label{thm:main}
	  The following alternative holds.
	\begin{enumerate}[(i)]
		\item \label{thm:case-i} For all $u\in E$, $u(0)=0$. Then, there exist a finite, non-increasing, sequence of positive integers $\{\ell_j\}_{j=1}^m$ and a finite sequence of positive numbers $\{\hat\nu_j\}_{j=1}^m$ such that 
		\begin{equation}\label{eq:case-i}
		\nu^\eps_{j+N-1}=\hat\nu_j\eps^{2\ell_j}+o\left(\eps^{2\ell_j}\right)\quad \text{for all }1\le j\le m.
		\end{equation}
		\item \label{thm:case-ii} There exists $u\in E$ such that $u(0)\neq0$. Then, there exist a finite, non-increasing, sequence of positive integers $\{\ell_j\}_{j=1}^{m-1}$, a finite sequence of positive numbers $\{\hat\nu_j\}_{j=1}^{m-1}$and a positive constant $c$ such that 
	\begin{align}		
		\label{eq:case-ii-1} \nu^\eps_{j+N-1}&=\hat\nu_j\eps^{2\ell_j}+o\left(\eps^{2\ell_j}\right)\quad \text{for all }1\le j\le m-1,\\
		\label{eq:case-ii-2} \nu^\eps_{m+N-1}&=\frac{c}{|\log(\eps)|}+o\left(\frac{1}{|\log(\eps)|}\right).
	\end{align}
	\end{enumerate} 
\end{thm}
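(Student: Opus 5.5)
The plan is to reduce the problem to the asymptotic analysis of a quadratic form on the fixed finite-dimensional space $E$. This is the standard "lemma on small eigenvalues" (Colin de Verdière) or quasimode approach. For each $u\in E$ we let $V_\eps u\in H^1_0(\Omega)$ denote the capacitary potential correction. It is the solution of $\Delta V_\eps u=0$ in $\Omega_\eps$ with $V_\eps u=u$ on $\partial(\eps\omega)$ and $V_\eps u=0$ on $\partial\Omega$, extended by $u$ inside $\eps\overline\omega$. We then set $\Pi_\eps u:=u-V_\eps u\in H^1_0(\Omega_\eps)$ and consider the quadratic form
\[
r_\eps(u):=Q(\Pi_\eps u)-\lambda_N\|\Pi_\eps u\|^2 ,\qquad u\in E.
\]
Expanding, and using $Q(u,\varphi)=\lambda_N\langle u,\varphi\rangle$, we get $r_\eps(u)=Q(V_\eps u)-\lambda_N\|V_\eps u\|^2$ up to the cross terms, which integrate by parts to boundary terms on $\partial(\eps\omega)$. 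The main term is the $\eps$-dependent $u$-capacity
\[
\mathrm{Cap}_{\Omega}(\eps\overline\omega,u)=\int_\Omega|\nabla V_\eps u|^2\,dx .
\]
The first key step is the abstract lemma. We show that $\|V_\eps u\|\le C\sqrt{r_\eps(u)}\,\eta(\eps)$ with $\eta\to0$, and that the spectral gap $\lambda_{N+m}-\lambda_N>0$ holds. Together these give
\[
\nu^\eps_{j+N-1}=\mu_j^\eps\bigl(1+o(1)\bigr),
\]
where $\mu^\eps_1\le\dots\le\mu^\eps_m$ are the eigenvalues of $r_\eps$ on $E$ relative to the $L^2$ scalar product. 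Because we need the \emph{exact} leading coefficient for each $j$ separately, the error cannot be an additive $o(\max_j\mu^\eps_j)$. It has to be relative to each eigenvalue. I will therefore prove the sharper estimate $|\nu^\eps_{j+N-1}-\mu^\eps_j|=o(\mu_j^\eps)$ by a min-max argument in which the trial spaces are spans of $\Pi_\eps u$ over suitable subspaces of $E$. The converse bound comes from projecting the perturbed eigenfunctions onto $E$ and controlling the orthogonal remainder through the gap. This is where the original argument apparently went wrong, so I will carry it out carefully with a Rayleigh–Ritz-type inequality.

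The second step is the asymptotics of $\mu_j^\eps$. We decompose $E$ using the Taylor expansions at $0$. Every $u\in E$ satisfies $-\Delta u=\lambda_N u$, so near $0$ its lowest-order nonvanishing homogeneous part is $u(0)$ if $u(0)\ne0$. Otherwise it is a harmonic homogeneous polynomial $P_k$ of degree $k\ge1$ (a combination of $r^k\cos k\theta$ and $r^k\sin k\theta$). We define the filtration $E_k=\{u\in E:\ u=O(|x|^k)\}$. Rescaling by $\eps$ gives:
\begin{itemize}
\item if $u(0)\neq0$, then $\mathrm{Cap}(\eps\overline\omega,u)\sim 2\pi u(0)^2/|\log\eps|$;
\item if $u\in E_k$ with leading part $P_k$, then $\mathrm{Cap}(\eps\overline\omega,u)=\eps^{2k}\,\mathrm{Cap}_{\mathbb R^2}(\overline\omega,P_k)+o(\eps^{2k})$, where the limiting exterior capacity is a positive definite quadratic form on degree-$k$ harmonic polynomials.
\end{itemize}
The dimension jumps of the filtration are at most $2$ for $k\ge1$ and at most $1$ at $k=0$. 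In case (ii), exactly one direction has $u(0)\neq0$. This produces the single logarithmic eigenvalue, with $c=2\pi\,\max_{\|u\|=1,\,u\perp E_1}u(0)^2$ up to normalisation. In case (i) there is no such direction.

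The third step assembles the eigenvalues of $r_\eps$ from this graded structure. For a quadratic form whose restriction to $E_k\ominus E_{k+1}$ is of order $\eps^{2k}$, the $j$-th eigenvalue is asymptotic to $\eps^{2\ell_j}$ times an eigenvalue of the limiting form on the graded piece. Here $\ell_j$ is the degree of the piece containing index $j$, and the $\ell_j$ are non-increasing because larger degrees give smaller eigenvalues. To justify this I will use min-max with block estimates. The off-diagonal terms between degrees $k<k'$ are $O(\eps^{k+k'})$ by Cauchy–Schwarz, which does not spoil the leading orders after a Schur-complement argument. In case (ii) the coupling between the log-order block and the polynomial blocks must be controlled. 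This is the step I expect to be the main obstacle: there are cross terms like $\eps^{k}/|\log\eps|^{1/2}$, and the limiting coefficient on each polynomial block may be modified by a projection. I would handle it by a Schur complement. The effective form on $E_1$ is $r_\eps|_{E_1}$ minus a correction of relative size $O(1/|\log\eps|)$, so the leading coefficients $\hat\nu_j$ are given by the limiting capacities on the graded pieces $E_k/E_{k+1}$, computed in the $L^2$-orthogonal decomposition.
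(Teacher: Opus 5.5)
Your Step 1 is sound in substance. The cross terms $q(u,V^u_\eps)$ actually vanish identically, so $r_\eps(u)=Q(V^u_\eps)-\lambda_N\|V^u_\eps\|^2$. The relative equivalence you want is what the paper obtains from the exact reduction through $T_\eps$ and the bound $\|h^u_\eps\|\le C_0\|V^u_\eps\|$.

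The genuine gap is in Step 3, and it reproduces exactly the error this paper corrects. You control the coupling between different vanishing orders only by Cauchy--Schwarz, $O(\eps^{k+k'})$, and conclude that the leading coefficients are the limiting capacities on your graded pieces $E_k\ominus E_{k+1}$. But $\eps^{k+k'}$ is precisely the critical size: the Schur complement keeps the exponents but changes the coefficients at leading order. Take $m=2$, orders $k_1>k_2>0$, and an adapted $L^2$-orthonormal basis $u_1,u_2$. Then $a_{1,1}\sim\gamma_1\eps^{2k_1}$, $a_{2,2}\sim\gamma_2\eps^{2k_2}$ and $a_{1,2}\sim\beta\eps^{k_1+k_2}$, with $\beta=\int_{\mathbb R^2}\nabla\Phi^{u_1}\cdot\nabla\Phi^{u_2}\,dx$, which is nonzero in general. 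The small eigenvalue is $(\gamma_1-\beta^2/\gamma_2)\eps^{2k_1}+o(\eps^{2k_1})$, not $\gamma_1\eps^{2k_1}$ as your recipe gives. Worse, with only an $O(\eps^{k+k'})$ bound on the off-diagonal entries you cannot show that $\nu^\eps_{j+N-1}\eps^{-2\ell_j}$ converges at all, since a bounded oscillating $\beta(\eps)$ is compatible with your bounds. So even the existence of the $\hat\nu_j$ in case (i) is not established.

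To close the gap you need three things the proposal does not provide.
\begin{enumerate}[(a)]
\item The full asymptotics $Q(V^u_\eps,V^v_\eps)=\eps^{\kappa(u)+\kappa(v)}\int_{\mathbb R^2}\nabla\Phi^u\cdot\nabla\Phi^v\,dx+o(\eps^{\kappa(u)+\kappa(v)})$, also when $\kappa(u)\neq\kappa(v)$, not just the diagonal capacities.
\item Positive definiteness of the whole Gram matrix $\hat A_0$ of $\{\Phi^{u_j}\}_{j=1}^m$, i.e.\ linear independence across all degrees. The coefficients are eigenvalues of Schur complements, and their positivity does not follow from positivity on each degree separately.
\item The correct algebra. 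After rescaling by $\mathrm{diag}(\eps^{-\ell_1},\dots,\eps^{-\ell_m})$, the coefficients for the degree-$k_i$ block are the eigenvalues of the Schur complement of the lower-degree blocks $i+1,\dots,p$ in the submatrix of $\hat A_0$ indexed by blocks $i,\dots,p$. This is what the Gram--Schmidt pair started from $e_m$ encodes. Equivalently, it is the Gram matrix of the degree-$k_i$ potentials after projecting them, in the Dirichlet inner product, orthogonally to the lower-degree potentials.
\end{enumerate}

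In case (ii) there is a related unjustified step. Cauchy--Schwarz only gives $a_{m,j}=O(\eps^{\ell_j}|\log\eps|^{-1/2})$, which would make the correction $a_{m,i}a_{m,j}/a_{m,m}$ of the same order $\eps^{\ell_i+\ell_j}$ as the main terms. For this correction to be negligible, as you claim, you need the mixed estimate $Q(V^u_\eps,V^v_\eps)=O(\eps^{\kappa(u)}/|\log\eps|)$ for $u(0)=0\neq v(0)$. The paper extracts this from the series expansion of the generalized capacity, where the pure $\eps^{\kappa(u)}$ term vanishes; it yields $a_{m,j}=o(\eps^{\ell_j}|\log\eps|^{-1/2})$. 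With it, the logarithmic direction does decouple, but the polynomial blocks remain coupled as described above.
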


In \cite[Section 4]{JFA2022} we attempted to prove the theorem by an iterative procedure relying on the so-called \emph{Lemma on Small Eigenvalues} due to Yves Colin de Verdière (see \cite{Colin1986,ColboisColin1987}). However, this method is incorrect since the key estimate on page 38
\[
|q^\eps_{p-1}(v,w)| \leq o\left( \Big(\frac{\rho^\eps_{k_p-1}}{\rho^\eps_{k_p}}\Big)^{\frac12} \right) \, \|v\|\,\|w\|\]
does not hold in general. In this note, we carry out the proof using a different approach. We reduce the study of the spectral shifts to a finite dimensional eigenvalue problem and find the asymptotic behavior of its eigenvalues by combining an estimate inspired by the aforementioned lemma with standard tools in linear algebra. Our analysis shows that the coefficients in the asymptotic expansion provided by \cite[Theorem 1.17]{JFA2022} (corresponding to the $\{\hat \nu_j\}_{j=1}^m$ in Theorem \ref{thm:main}) are in general wrong, and gives the correct coefficients. 

\section{Reduction to a finite dimensional problem}
\label{sec:finite}

We have seen that we can isolate a group of $m$ eigenvalues in $\{\lambda_j^\eps\}_{j\ge1}$ which converge to $\lambda_N$. In order to state this fact more explicitly, we define a \emph{spectral gap}
 \[\gamma :=\frac12 \min\{ \lambda_N - \lambda_{N-1}, \lambda_{N+m} - \lambda_{N}  \}>0\]
and we observe that \eqref{eq:conv-ev} implies that 
 \begin{equation}
 	\nu_{j+N-1}^\eps\to 0\quad\text{for all } 1\le j\le m
 \end{equation}
and that there exists $\eps_1>0$ (with $\eps_1\le\eps_0$) such that, for all $0<\eps\le\eps_1$,
\begin{equation}
\label{eq:gap-condition}
 \nu_j^\eps\le-\gamma\text{ if }j\le N-1\quad\text{and}\quad\nu_j^\eps\ge
 \gamma\text{ if }j\ge N+1.
\end{equation}
 For such an $\eps$, we define the $m$-dimensional subspace $E_\eps$ of $L^2(\Omega_\eps)$ by
\begin{equation*}
	E_\eps:=E(\lambda_{N}^\eps)+E(\lambda_{N+1}^\eps)+\cdots+E(\lambda_{N+m-1}^\eps)
\end{equation*}  
(where $E(\lambda_j^\eps)$ denotes the eigenspace associated with $\lambda_j^\eps$). We note that $E_\eps$ is a subspace of $H_0^1(\Omega_\eps)$.

In order to study the spectral shifts, we define the quadratic form $q$ on $H^1_0(\Omega)$ by
\begin{equation*}
	q(u):=Q(u)-\lambda_N\|u\|^2=\int_\Omega \left|\nabla u\right|^2\,dx-\lambda_N\int_\Omega u^2\,dx
\end{equation*}
and we observe that 
\begin{equation}
\label{eq:evE}
 \forall\,u\in E,\ \forall\,\varphi\in H^1_0(\Omega),\ q(u,\varphi)=0.
\end{equation} 
We now define $q_\eps$ as the restriction of $q$ to the space $H^1_0(\Omega_\eps)$. A characterization of the spectral shifts follows immediately.
\begin{lem}\label{lem:shifts-1} The eigenvalues of $q_\eps$
 relative to the scalar product in $L^2(\Omega_\eps)$ are $\{\nu_j^\eps\}_{j\ge1}$, with the same associated eigenspaces as $\{\lambda_j^\eps\}_{j\ge1}$. In particular, the spectral shifts $\{\nu_{j+N-1}^\eps\}_{j=1}^m$ are the eigenvalues of the restriction of $q_\eps$ to $E_\eps$ relative to the restriction of the scalar product.
\end{lem}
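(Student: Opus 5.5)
The first assertion is a direct computation from the definition of $q_\eps$. For $u,\varphi\in H^1_0(\Omega_\eps)$ (viewed as functions on $\Omega$ extended by zero) we have
\[
q_\eps(u,\varphi)=Q(u,\varphi)-\lambda_N\langle u,\varphi\rangle .
\]
Hence $u$ satisfies $q_\eps(u,\varphi)=\nu\langle u,\varphi\rangle$ for all $\varphi\in H^1_0(\Omega_\eps)$ if and only if $Q(u,\varphi)=(\nu+\lambda_N)\langle u,\varphi\rangle$ for all such $\varphi$. So $u$ is an eigenfunction of $q_\eps$ with eigenvalue $\nu$ exactly when it is an eigenfunction of $Q$ on $H^1_0(\Omega_\eps)$ with eigenvalue $\nu+\lambda_N$. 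Equivalently, the self-adjoint operator associated with $q_\eps$ is $A_\eps-\lambda_N I$, where $A_\eps$ is the Dirichlet Laplacian on $\Omega_\eps$; since $A_\eps$ has compact resolvent, its spectrum is purely discrete. Shifting by the constant $\lambda_N$ preserves the ordering and the multiplicities, so the eigenvalues of $q_\eps$, ordered non-decreasingly, are $\lambda_j^\eps-\lambda_N=\nu_j^\eps$, with eigenspaces $E(\lambda_j^\eps)$. The minmax characterizations of both families correspond under the same shift, which confirms that the indexing agrees.

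For the second assertion, choose an $L^2(\Omega_\eps)$-orthonormal basis $u_1,\dots,u_m$ of $E_\eps$ with $u_i$ an eigenfunction for $\lambda^\eps_{i+N-1}$. If $i\ne k$ and the eigenvalues are distinct, orthogonality follows from the symmetry of $Q$; within a common eigenspace we orthonormalize by Gram--Schmidt. Using the eigenfunction equation with $\varphi=u_k$,
\[
q_\eps(u_i,u_k)=\nu^\eps_{i+N-1}\langle u_i,u_k\rangle=\nu^\eps_{i+N-1}\delta_{ik}.
\]
So in this basis the restriction of $q_\eps$ to $E_\eps$ is represented, relative to the restricted scalar product, by the diagonal matrix $\mathrm{diag}(\nu^\eps_N,\dots,\nu^\eps_{N+m-1})$. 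Its eigenvalues are therefore exactly the spectral shifts, counted with multiplicity.

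The only point needing care is the definition of $E_\eps$ as a sum of eigenspaces. If some $\lambda^\eps_j$ with $N\le j\le N+m-1$ had multiplicity reaching outside this index range, $E_\eps$ would not be $m$-dimensional. The gap condition \eqref{eq:gap-condition} rules this out for $\eps\le\eps_1$: the values $\lambda^\eps_j$ with $j\le N-1$ or $j\ge N+m$ are separated from the cluster by at least $\gamma$, while the cluster values themselves tend to $0$ after the shift. Hence $E_\eps$ is exactly the span of the eigenfunctions with indices $N,\dots,N+m-1$ and has dimension $m$. With this observation, both parts of the lemma are routine.
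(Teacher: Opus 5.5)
Your proof is correct and takes the same approach as the paper, which simply notes that the lemma ``follows immediately'' from $q_\eps=Q-\lambda_N\|\cdot\|^2$ on $H^1_0(\Omega_\eps)$: eigenpairs shift by $\lambda_N$, and diagonalizing on an orthonormal eigenbasis of $E_\eps$ gives the second claim. Your extra check that $E_\eps$ is $m$-dimensional makes explicit what the paper takes for granted; it uses \eqref{eq:gap-condition} with $j\ge N+m$ (what that condition should read, since $j\ge N+1$ is a typo) together with $\nu^\eps_{j}\to0$ for $N\le j\le N+m-1$.
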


We now seek a reformulation of this characterization in the fixed $m$-dimensional vector space $E$.  We first define the $L^2$-orthogonal projector 
\begin{equation*}
	\Pi_\eps:L^2(\Omega_\eps)\to E_\eps.
\end{equation*}
Then, following \cite{JST2019}, we define the linear map
\[P_\eps: H^1_0(\Omega)\to H^1_0(\Omega_\eps), \quad P_\eps(u):= u - V_\eps^u\]
in such a way that $P_\eps$ is a $Q$-orthogonal projection. In other words, for a given $u\in H^1_0(\Omega)$, the $u$-capacitary potential $V_\eps^u$ (see \cite{JST2019} for more details and a justification of the terminology) is defined by the properties
\begin{equation}\label{eq:orthoV}
u-V_\eps^u\in H^1_0(\Omega_\eps)\quad\mbox{ and }\quad \ \forall\,\varphi\in H^1_0(\Omega_\eps),\ Q(V_\eps^u,\varphi)=0.
\end{equation}
The next result follows from Proposition B.1 and Lemma A.1 in \cite{JST2019}.
\begin{lem}\label{l:V} For all $u\in H^1_0(\Omega)$, $Q\left(V_\eps^u\right)\to0$ and $\|V_\eps^u\|^2=o\left(Q\left(V_\eps^u\right)\right)$.
\end{lem}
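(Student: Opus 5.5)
Both claims of Lemma~\ref{l:V} are said to follow from Proposition B.1 and Lemma A.1 in \cite{JST2019}; still, here is the direct argument I would give in this planar setting.

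\textbf{Variational characterization.} By \eqref{eq:orthoV}, $V_\eps^u$ is the $Q$-orthogonal projection of $u$ onto the $Q$-orthogonal complement of $H^1_0(\Omega_\eps)$ in $H^1_0(\Omega)$. Equivalently,
\[
Q(V_\eps^u)=\min\{Q(u-\varphi):\varphi\in H^1_0(\Omega_\eps)\}.
\]
So $V_\eps^u$ minimizes $Q(V)$ among all $V\in H^1_0(\Omega)$ with $V=u$ on $\eps\omega$ (in the $H^1_0$ sense). Note that $V_\eps^u$ is harmonic in $\Omega_\eps$.

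\textbf{First claim: $Q(V_\eps^u)\to 0$.} It is enough to exhibit $\varphi_\eps\in H^1_0(\Omega_\eps)$ with $Q(u-\varphi_\eps)\to0$.
\begin{enumerate}[1.]
\item By density and the bound $Q(V_\eps^u)^{1/2}\le Q(u-u_k)^{1/2}+Q(V_\eps^{u_k})^{1/2}$ (the projection is $Q$-contractive), we may take $u\in C_c^\infty(\Omega)$.
\item Let $\chi_\eps$ be the logarithmic cutoff: equal to $1$ on $B_{r\eps}$, equal to $0$ outside $B_{\sqrt\eps}$, and $\log(|x|/\sqrt\eps)/\log(r\sqrt\eps)$ in between, with $r$ chosen so that $\eps\omega\subset B_{r\eps}$.
\item Then $\int|\nabla\chi_\eps|^2\sim 2\pi/|\log(r\sqrt\eps)|\to0$. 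This is the two-dimensional fact that a point has zero capacity.
\item Set $\varphi_\eps=u(1-\chi_\eps)$. We get $Q(u\chi_\eps)\le 2\|u\|_\infty^2\int|\nabla\chi_\eps|^2+2\|\nabla u\|_\infty^2|B_{\sqrt\eps}|\to0$.
\end{enumerate}

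\textbf{Second claim: $\|V_\eps^u\|^2=o(Q(V_\eps^u))$.} I would argue by contradiction with a normalized sequence.
\begin{enumerate}[1.]
\item If $Q(V_\eps^u)=0$ the claim is trivial. Otherwise, suppose along a sequence $\eps_n\to0$ that $\|V_n\|^2\ge c\,Q(V_n)$, with $V_n=V_{\eps_n}^u$.
\item Put $W_n=V_n/\|V_n\|$. Then $\|W_n\|=1$ and $Q(W_n)\le 1/c$.
\item By Rellich compactness in $H^1_0(\Omega)$, a subsequence converges weakly in $H^1_0$ and strongly in $L^2$ to some $W$ with $\|W\|=1$.
\item For every $\varphi\in C_c^\infty(\Omega\setminus\{0\})$, we have $\varphi\in H^1_0(\Omega_{\eps_n})$ for $n$ large, so $Q(W,\varphi)=\lim Q(W_n,\varphi)=0$.
\item Since $\{0\}$ has zero capacity in $\mathbb R^2$, $C_c^\infty(\Omega\setminus\{0\})$ is dense in $H^1_0(\Omega)$. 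Hence $Q(W,\cdot)=0$ on $H^1_0(\Omega)$, so $W=0$, contradicting $\|W\|=1$.
\end{enumerate}

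\textbf{Main obstacle.} The delicate point is step 5 of the second claim: justifying the density of $C_c^\infty(\Omega\setminus\{0\})$ in $H^1_0(\Omega)$. This is precisely the logarithmic cutoff construction from the first claim, now applied to an arbitrary test function rather than to $u$.
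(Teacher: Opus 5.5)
Your proof is correct, and it takes a different route from the paper. The paper gives no argument of its own: it imports both facts from Proposition B.1 and Lemma A.1 of \cite{JST2019}, where they are proved for general families of compact sets concentrating to a compact set of zero capacity. Your argument is instead self-contained and specific to $\eps\overline\omega\to\{0\}$ in the plane. The logarithmic cutoff gives the first claim, and also the rate $O(1/|\log\eps|)$ for $u$ smooth near $0$. The Rellich compactness argument, combined with the zero capacity of a point, gives the second claim.

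Two features of your route are worth noting. First, both claims rest on the single fact that $C_c^\infty(\Omega\setminus\{0\})$ is dense in $H^1_0(\Omega)$. Any such $\varphi$ lies in $H^1_0(\Omega_\eps)$ for $\eps$ small, so $Q(V_\eps^u)\le Q(u-\varphi)$ eventually. This already gives $Q(V_\eps^u)\to0$ without the reduction to smooth $u$. Second, your contradiction argument only uses that $V_n$ is $Q$-orthogonal to $H^1_0(\Omega_{\eps_n})$, never that $V_n=V^u_{\eps_n}$ for a fixed $u$. It therefore proves the uniform statement $\sup\{\|V\|^2/Q(V)\}\to0$, where the supremum runs over nonzero $V$ in the $Q$-orthogonal complement of $H^1_0(\Omega_\eps)$. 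This is stronger than what the lemma asks.

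What the citation buys is generality, for instance the $d\ge3$ setting discussed at the end of the paper. Your argument adapts there at once, using a standard cutoff of energy $O(\eps^{d-2})$ in place of the logarithmic one.

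One cosmetic point: state the contradiction hypothesis as $\|V_n\|^2>c\,Q(V_n)$. This forces $V_n\neq0$ and avoids the ``if $Q(V_\eps^u)=0$'' case split, which is phrased $\eps$ by $\eps$ and does not quite match a sequential argument.
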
 

We now define the mapping 
\begin{equation*}
	T_\eps:E\to E_\eps, \quad T_\eps(u)=\Pi_\eps(P_\eps(u)).
\end{equation*}
For any $u\in E$, we introduce the notation $h_\eps^u=P_\eps(u)-T_\eps(u)$, so that 
\begin{equation*}
	T_\eps(u)=u-V_\eps^u-h_\eps^u.
\end{equation*}
Let us note that, since $\Pi_\eps$ is a spectral projector,  $h_\eps^u$ is both $L^2$-orthogonal and $Q$-orthogonal to the subspace $E_\eps$.

The following estimate is the crucial step in our proof of Theorem \ref{thm:main}. It is inspired by the \emph{Lemma on Small Eigenvalues} (see \cite{Colin1986,ColboisColin1987,Courtois1995}).
\begin{lem}\label{l:lpvp} There exists a constant $C_0$ such that, for all $0<\eps\le\eps_1$,  
\begin{align}
	\label{eq:lsev} \forall\,u\in E,\ \left\|h_\eps^u\right\|&\le C_0\left\|V_\eps^u\right\|\,.
\end{align}
\end{lem}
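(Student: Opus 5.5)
The plan is to exploit the spectral gap \eqref{eq:gap-condition} together with the variational identity satisfied by $P_\eps(u)$. Fix $0<\eps\le\eps_1$ and $u\in E$, and write $V=V_\eps^u$, $h=h_\eps^u$. First I compute $q(P_\eps(u),\varphi)$ for an arbitrary test function $\varphi\in H^1_0(\Omega_\eps)$. Since $P_\eps(u)=u-V$, and $\varphi$ can also be regarded as an element of $H^1_0(\Omega)$, I use two facts. By \eqref{eq:evE}, $q(u,\varphi)=0$. By \eqref{eq:orthoV}, $Q(V,\varphi)=0$. Together these give
\begin{equation*}
 q(P_\eps(u),\varphi)=-q(V,\varphi)=-Q(V,\varphi)+\lambda_N\langle V,\varphi\rangle=\lambda_N\langle V,\varphi\rangle .
\end{equation*}
On the other hand, $P_\eps(u)=T_\eps(u)+h$ with $T_\eps(u)\in E_\eps$ and $h\in H^1_0(\Omega_\eps)$ lying in the spectral complement of $E_\eps$. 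Hence $q(P_\eps(u),\varphi)=q(T_\eps(u),\varphi)+q(h,\varphi)$.

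Next I split the spectral complement according to the sign of the spectral shifts. By Lemma \ref{lem:shifts-1}, $H^1_0(\Omega_\eps)\cap E_\eps^\perp$ decomposes $L^2$- and $q_\eps$-orthogonally into two pieces. The first is the finite-dimensional span $F_-$ of the eigenfunctions with $j\le N-1$, on which $q_\eps(w)\le-\gamma\|w\|^2$. The second is the closure $F_+$ of the span of the eigenfunctions with $j\ge N+m$, on which $q_\eps(w)\ge\gamma\|w\|^2$. Both bounds follow from \eqref{eq:gap-condition}. Accordingly I write $h=h_-+h_+$ with $h_\pm\in F_\pm$, and $h_+$ stays in $H^1_0(\Omega_\eps)$ because $h_-$ does. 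Since distinct spectral subspaces are $q_\eps$-orthogonal, $q(T_\eps(u),h_\pm)=0$ and $q(h,h_\pm)=q(h_\pm)$.

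Now I test the identity above with $\varphi=h_+$, which gives $\gamma\|h_+\|^2\le q(h_+)=\lambda_N\langle V,h_+\rangle\le\lambda_N\|V\|\,\|h_+\|$, so $\|h_+\|\le(\lambda_N/\gamma)\|V\|$. Testing with $\varphi=h_-$ gives $\gamma\|h_-\|^2\le-q(h_-)=-\lambda_N\langle V,h_-\rangle\le\lambda_N\|V\|\,\|h_-\|$, so the same bound holds for $h_-$. Since $h_-$ and $h_+$ are $L^2$-orthogonal, I conclude
\begin{equation*}
\|h\|\le\frac{\sqrt2\,\lambda_N}{\gamma}\|V\|,
\end{equation*}
that is, \eqref{eq:lsev} holds with $C_0=\sqrt2\,\lambda_N/\gamma$, which is independent of $\eps$.

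The only delicate point is that $q_\eps$ is indefinite on the complement of $E_\eps$, so one cannot simply test with $h$ itself. The splitting into $h_+$ and $h_-$ handles this. I also need to check that the decomposition stays within the form domain and that the orthogonality relations hold there. This follows from the spectral theorem for the compact-resolvent operator associated with $q_\eps$.
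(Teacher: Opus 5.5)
Your proof is correct and follows the paper's argument essentially step for step. You split $h_\eps^u$ into the spectral pieces below and above the cluster, and use \eqref{eq:evE} and \eqref{eq:orthoV} to get $q_\eps(h_\eps^{u\pm})=\lambda_N\langle V_\eps^u,h_\eps^{u\pm}\rangle$; the gap bound then gives $\|h_\eps^{u\pm}\|\le(\lambda_N/\gamma)\|V_\eps^u\|$, hence the same constant $C_0=\sqrt2\,\lambda_N/\gamma$, and you also make explicit the form-domain points that the paper leaves implicit.
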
  
\begin{proof} For all $\eps>0$, we denote by $\{u_j^\eps\}_{j\ge 1}$ an orthonormal basis of $L^2(\Omega_\eps)$ consisting of eigenfunctions associated with the perturbed eigenvalues $\{\lambda_j^\eps\}_{j\ge 1}$. Let us now fix $u\in E$. The function $h_\eps^u$ is orthogonal to the subspace $E_\eps$ of $H^1_0(\Omega_\eps)$ for the bilinear form $q_\eps$. Thus,
\[
h_\eps^u = \sum_{j=1}^{N-1} c_j u_j^\eps + \sum_{j=N+m}^{+\infty} c_j u_j^\eps=: h_\eps^{u-} + h_\eps^{u+}
\]
and we have
\begin{align*}
 \| h_\eps^u \|^2 = \| h_\eps^{u+} \|^2 + \| h_\eps^{u-} \|^2 \quad \text{and}\quad
q_\eps( h_\eps^u ) = q_\eps(h_\eps^{u+}) + q_\eps( h_\eps^{u-}).
\end{align*}

Using the eigenvalues $\{\nu_j^\eps\}_{j\ge1}$ of $q_\eps$, we can write
\begin{align*}
    & \|h_\eps^{u-}\|^2 = \sum_{j=1}^{N-1} c_j^2 \quad \text{and}\quad q_\eps(h_\eps^{u-}) = \sum_{j=1}^{N-1} \nu_j^\eps c_j^2;\\
    & \|h_\eps^{u+}\|^2 = \sum_{j=N+m}^{+\infty} c_j^2 \quad \text{and}\quad  q_\eps(h_\eps^{u+}) = \sum_{j=N+m}^{+\infty} \nu_j^\eps c_j^2.
\end{align*}
Since $\eps\le \eps_1$, we have $\nu_j^\eps\le-\gamma$ for $j\le N-1$ and $\nu_j^\eps\ge \gamma$ for $j\ge N+1$. Therefore, 
\begin{equation}\label{eq:qe1}
|q_\eps(h_\eps^{u\pm})| \geq \gamma \|h_\eps^{u\pm}\|^2.   
\end{equation}
We can compute explicitly the quadratic form $q_\eps$ on $h_\eps^{u\pm}$ using \eqref{eq:orthoV}, \eqref{eq:evE} and mutual orthogonality:
\begin{align*}
    q_\eps(h_\eps^\pm) &= q_\eps(h_\eps^{u}, h_\eps^{u\pm}) = q_\eps(u-V_\eps^u, h_\eps^{u\pm})=q(u,h_\eps^{u\pm})-q(V_\eps^u,h_\eps^{u\pm})\\
    &=-Q(V_\eps^u,h_\eps^{u\pm})+\lambda_N\langle V_\eps^u,h_\eps^{u\pm}\rangle=\lambda_N\langle V_\eps^u,h_\eps^{u\pm}\rangle.
\end{align*}
Together with \eqref{eq:qe1}, this implies successively 
\begin{align*}
\gamma \|h_\eps^{u\pm}\|^2 &\leq |q_\eps(h_\eps^{u\pm})| \leq \lambda_N \|V_\eps^{u}\| \|h_\eps^{u\pm}\|,\\
\|h_\eps^{u\pm}\| & \leq \frac{\lambda_N}{\gamma} \|V_\eps^{u}\|,\\
 \|h_\eps^{u}\|&=\sqrt{\|h_\eps^{u-}\|^2+\|h_\eps^{u+}\|^2} \leq \frac{\sqrt2 \lambda_N}{\gamma} \|V_\eps^{u}\|.\qedhere
\end{align*}
\end{proof}

It follows immediately from Lemmas \ref{l:V} and \ref{l:lpvp} that, given any $u\in E$, $V_\eps^u\to 0$ and $h_\eps^u\to 0$ in $L^2(\Omega)$, and therefore $T_\eps(u)\to u$. Since $E$ is finite dimensional, this implies
\[\sup_{u\in E, \|u\|=1}\|T_\eps u-u\|\to0.\] 
As a consequence, for $\eps>0$ small enough, $T_\eps:E\to E_\eps$ is injective and therefore bijective since, by construction, $E_\eps$ has dimension $m=\mbox{dim}(E)$. 

Assuming bijectivity from now on, we define the quadratic forms on $E$
\[a_\eps(u,v):=q_\eps\left(T_\eps u,T_\eps v\right)
\quad\text{and}\quad b_\eps(u,v):=\langle T_\eps u, T_\eps v\rangle.\]
We then immediately deduce from Lemma \ref{lem:shifts-1} another characterization of the $m$ vanishing spectral shifts.
\begin{lem}\label{lem:shifts-2} For $\eps>0$ small enough, the spectral shifts $\{\nu_{j+N-1}^\eps\}_{j=1}^m$ are the eigenvalues of the quadratic form $a_\eps(\cdot\,,\cdot)$, relative to the scalar product $b_\eps(\cdot\,,\cdot)$, both defined on $E$.
\end{lem}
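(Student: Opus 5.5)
The plan is to transport the eigenvalue problem of Lemma \ref{lem:shifts-1} from $E_\eps$ to $E$ through the bijection $T_\eps$, and to read off the eigenvalues from the Rayleigh quotient. By Lemma \ref{lem:shifts-1}, the numbers $\{\nu_{j+N-1}^\eps\}_{j=1}^m$ are the eigenvalues of $q_\eps|_{E_\eps}$ relative to $\langle\cdot\,,\cdot\rangle|_{E_\eps}$. Equivalently, they are the critical values given by the min-max characterization
\[
\nu_{j+N-1}^\eps=\min_{\substack{F\subset E_\eps\\ \dim F=j}}\ \max_{w\in F\setminus\{0\}}\frac{q_\eps(w)}{\|w\|^2},\qquad 1\le j\le m.
\]

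I first fix $\eps$ small enough that $T_\eps:E\to E_\eps$ is a linear bijection; this was established just before the statement. Then $b_\eps$ is a genuine scalar product on $E$, since $b_\eps(u,u)=\|T_\eps u\|^2>0$ for $u\neq0$ by injectivity.

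Next I use that a linear bijection maps $j$-dimensional subspaces $G\subset E$ bijectively onto $j$-dimensional subspaces $F=T_\eps G\subset E_\eps$. Under this correspondence the Rayleigh quotients agree:
\[
\frac{a_\eps(u)}{b_\eps(u)}=\frac{q_\eps(T_\eps u)}{\|T_\eps u\|^2}.
\]
Substituting this into the min-max formula shows that the $j$-th min-max value of $a_\eps$ relative to $b_\eps$ on $E$ equals $\nu_{j+N-1}^\eps$ for each $1\le j\le m$.

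A second, equivalent route is to check the eigenvector correspondence directly. Suppose $w=T_\eps u$ satisfies $q_\eps(w,z)=\nu\langle w,z\rangle$ for all $z\in E_\eps$. Writing $z=T_\eps v$, this becomes exactly $a_\eps(u,v)=\nu\, b_\eps(u,v)$ for all $v\in E$, and the converse implication holds in the same way. Multiplicities are preserved because $T_\eps$ is an isomorphism.

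No step is a real obstacle. The only point needing care is that bijectivity of $T_\eps$ holds only for $\eps$ small, which is why the statement carries that restriction.
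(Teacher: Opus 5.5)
Your proposal is correct and matches the paper's argument, which simply observes that the lemma follows immediately from Lemma \ref{lem:shifts-1} by pulling back the restricted eigenvalue problem on $E_\eps$ to $E$ through the bijection $T_\eps$. Both of your routes (min-max comparison of Rayleigh quotients, or direct correspondence of eigenvectors) are valid ways of spelling out that step.
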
 

Moreover, the limiting behavior of $a_\eps(\cdot,\cdot)$ and $b_\eps(\cdot,\cdot)$ is described by the following lemma.
\begin{lem}\label{l:qf-cap} 
For any $u$ and $v$ in $E\setminus\{0\}$,
\begin{align}
	\label{eq:asym-a} a_\eps(u,v)&=Q(V_\eps^{u},V_\eps^{v})+O\left(\left\|V_\eps^{u}\right\|\,\left\|V_\eps^{v}\right\|\right),\\
	\label{eq:asym-b} b_\eps(u,v)&=\langle u,v\rangle+o(1).
\end{align} 
\end{lem}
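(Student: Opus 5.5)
We expand $a_\eps$ and $b_\eps$ using the decomposition $T_\eps u = u - V_\eps^u - h_\eps^u$, then discard terms using \eqref{eq:evE}, \eqref{eq:orthoV}, the orthogonality properties of $h_\eps^u$, Lemma \ref{l:V} and Lemma \ref{l:lpvp}.

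For \eqref{eq:asym-b} the argument is short. By Lemma \ref{l:V}, $\|V_\eps^u\|\to0$. Lemma \ref{l:lpvp} then gives $\|h_\eps^u\|\le C_0\|V_\eps^u\|\to0$, and the same holds for $v$. Expanding $\langle u-V_\eps^u-h_\eps^u,\,v-V_\eps^v-h_\eps^v\rangle$, every term other than $\langle u,v\rangle$ contains at least one factor tending to $0$ in $L^2$. By Cauchy--Schwarz these terms are $o(1)$.

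For \eqref{eq:asym-a}, first simplify $q_\eps(T_\eps u,T_\eps v)$. Write $T_\eps u = P_\eps u - h_\eps^u$. Since $T_\eps u\in E_\eps$ and $h_\eps^v$ is $q_\eps$-orthogonal to $E_\eps$, we get $q_\eps(T_\eps u,T_\eps v)=q_\eps(T_\eps u,P_\eps v)$. Moreover
\[
q_\eps(P_\eps u,P_\eps v)=q_\eps(T_\eps u,P_\eps v)+q_\eps(h_\eps^u,P_\eps v)=a_\eps(u,v)+q_\eps(h_\eps^u,h_\eps^v),
\]
again using $P_\eps v=T_\eps v+h_\eps^v$ and orthogonality. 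Hence
\[
a_\eps(u,v)=q(P_\eps u,P_\eps v)-q_\eps(h_\eps^u,h_\eps^v).
\]
Next compute $q(u-V_\eps^u,\,v-V_\eps^v)$. By \eqref{eq:evE}, $q(u,\cdot)=0$ and $q(\cdot,v)=0$ on $H^1_0(\Omega)$, so only $q(V_\eps^u,V_\eps^v)$ survives. This equals
\[
q(V_\eps^u,V_\eps^v)=Q(V_\eps^u,V_\eps^v)-\lambda_N\langle V_\eps^u,V_\eps^v\rangle.
\]
The second term is $O(\|V_\eps^u\|\,\|V_\eps^v\|)$.

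It remains to bound $q_\eps(h_\eps^u,h_\eps^v)$. In the proof of Lemma \ref{l:lpvp} we showed $q_\eps(h_\eps^{u},h_\eps^{u\pm})=\lambda_N\langle V_\eps^u,h_\eps^{u\pm}\rangle$; more generally, for any $w$ that is $q_\eps$- and $L^2$-orthogonal to $E_\eps$, the same computation gives
\[
q_\eps(h_\eps^u,w)=q_\eps(u-V_\eps^u,w)=\lambda_N\langle V_\eps^u,w\rangle.
\]
This uses $Q(V_\eps^u,w)=0$ from \eqref{eq:orthoV} and $q(u,w)=0$ from \eqref{eq:evE}. Taking $w=h_\eps^v$ gives
\[
|q_\eps(h_\eps^u,h_\eps^v)|\le\lambda_N\|V_\eps^u\|\,\|h_\eps^v\|\le\lambda_N C_0\|V_\eps^u\|\,\|V_\eps^v\|
\]
by Lemma \ref{l:lpvp}. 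Combining the two computations yields \eqref{eq:asym-a}.

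The main obstacle is the bookkeeping of the cross terms. A naive expansion of $q_\eps(T_\eps u,T_\eps v)$ produces terms such as $q(V_\eps^u,h_\eps^v)$, which one might try to bound by $Q$-norms of $h_\eps^v$, and those are not controlled. The point is to rewrite every such term as an $L^2$ pairing via \eqref{eq:orthoV} and the spectral orthogonality of $h_\eps$, so that only $L^2$ norms of $V_\eps$ and $h_\eps$ appear. The order-$\|V_\eps^u\|\,\|V_\eps^v\|$ error then follows from Lemma \ref{l:lpvp}.
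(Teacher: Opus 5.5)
Your proof is correct and follows essentially the paper's route. Both arguments use the $q_\eps$-orthogonality of $h_\eps^u,h_\eps^v$ to $E_\eps$, the identity \eqref{eq:evE}, and $Q(V_\eps^u,h_\eps^v)=0$ from \eqref{eq:orthoV} to reach the same exact identity $a_\eps(u,v)=Q(V_\eps^u,V_\eps^v)-\lambda_N\langle V_\eps^u,V_\eps^v\rangle-\lambda_N\langle V_\eps^u,h_\eps^v\rangle$, and then conclude with Lemma \ref{l:lpvp}. Your passage through $q(P_\eps u,P_\eps v)-q_\eps(h_\eps^u,h_\eps^v)$ is only a rearrangement of the paper's direct expansion, and your treatment of \eqref{eq:asym-b} matches the paper's.
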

\begin{proof} Since $T_\eps w\to w$ in $L^2(\Omega)$ for all $w\in E$, the estimate \eqref{eq:asym-b} is immediate. 
\begin{align*}
	a_\eps(u,u)&=q_\eps(u-V_\eps^{u}-h_\eps^{u},v-V_\eps^{v}-h_\eps^{v})=q_\eps(u-V_\eps^{u},v-V_\eps^{v}-h^{v}_\eps)\\
	&=-q(V_\eps^{u},v-V_\eps^{v}-h_\eps^{v})=q(V_\eps^{u},V_\eps^{v}+h_\eps^{v})\\
	&=Q(V_\eps^{u},V_\eps^{v})-\lambda_N\langle V_\eps^{u},V_\eps^{v}\rangle+Q(V_\eps^{u},h_\eps^{v})-\lambda_N\langle V_\eps^{u},h_\eps^{v} \rangle\\
	&=Q(V_\eps^{u},V_\eps^{v})-\lambda_N\langle V_\eps^{u},V_\eps^{v}\rangle-\lambda_N\langle V_\eps^{u},h_\eps^{v} \rangle.
\end{align*} 
Combining this identity with estimate \eqref{eq:lsev}, we find \eqref{eq:asym-a}.
\end{proof} 

\section{Choice of suitable basis}
\label{sec:basis}

To estimate the vanishing spectral shifts, we study the quadratic forms $a_\eps(\cdot\,,\cdot)$ and $b_\eps(\cdot\,,\cdot)$ in a suitable basis. To that end, let us review the asymptotic analysis in \cite[Section 2]{JFA2022}. We first observe that any eigenfunction $u$ associated with an unperturbed eigenvalue is analytic in $\Omega$. In particular, it has a finite (and integer) order of vanishing at $0$, which we denote by $\kappa(u)$. More explicitly, 
\begin{equation}\label{eq:principal-part}
u(x)=u_\sharp(x)+O\left(|x|^{\kappa(u)+1}\right)\quad\text{as }x\to0
\end{equation}
where $u_\sharp$ is a homogeneous polynomial of degree $\kappa(u)\in\mathbb N$. We call $u_\sharp$ the \emph{principal part} of $u$. The fact that $u$ is an eigenfunction implies that $u_\sharp$ is harmonic. In the case $u(0)\neq0$, $\kappa(u)=0$ and $u_\sharp$ is constant equal to $u(0)$.

For any $u\in E\setminus\{0\}$ such that $u(0)=0$, we define the function $\Phi^u:\mathbb R^2\to \mathbb R$ by the properties
\begin{enumerate}[(i)]
\item in $\omega$, $\Phi^u$ equals $u_{\sharp}$,
\item in $\mathbb R^2\setminus \omega$, $\Phi^u$ equals $\mathsf{u}$, the unique function harmonic in $\mathbb R^2\setminus \overline{\omega}$, bounded at $\infty$, continuous in $\mathbb R^2\setminus \omega$, and such that $\mathsf{u}=u_\sharp$ on $\partial\omega$. 
\end{enumerate} 
  
\begin{prop} \label{p:cap-estimate} Let $u$ and $v$ be in $E\setminus\{0\}$.
\begin{enumerate}[(i)]
\item If $u(0)\neq 0$ and $v(0)\neq 0$, then
\begin{equation}\label{eq:cap-non-zero}
Q(V^u_\eps,V^v_{\eps})=\frac{2\pi\,u(0)\,v(0)}{|\log(\eps)|}+o\left(\frac{1}{|\log(\eps)|}\right).
\end{equation}
\item If $u(0)=v(0)=0$, then
\begin{equation}\label{eq:cap-uv}
Q(V^u_\eps,V^v_{\eps})=\eps^{\kappa(u)+\kappa(v)}\int_{\mathbb R^2}\nabla \Phi^u\cdot\nabla \Phi^v\,dx+o\left(\eps^{\kappa(u)+\kappa(v)}\right).
\end{equation}
\item If $u(0)=0$ and $v(0)\neq0$, then
 \begin{equation}\label{eq:cap-mixed}
Q(V^u_\eps,V^v_{\eps})=2\pi\left(\lim_{x\to\infty}\Phi^u(x)\right)v(0)\,\frac{\eps^{\kappa(u)}}{|\log(\eps)|}+o\left(\frac{\eps^{\kappa(u)}}{|\log(\eps)|}\right).
\end{equation}
\end{enumerate}
\end{prop}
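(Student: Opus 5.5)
The plan is to obtain all three items by polarization from the known asymptotics of the diagonal terms $Q(V^w_\eps)$ for a single function $w\in E$. These diagonal asymptotics are essentially \cite[Section 2]{JFA2022} and \cite{JST2019}. The map $u\mapsto V^u_\eps$ is linear, because it is the $Q$-orthogonal projection onto the $Q$-orthogonal complement of $H^1_0(\Omega_\eps)$. Hence
\[
Q(V^u_\eps,V^v_\eps)=\tfrac14\bigl(Q(V^{u+v}_\eps)-Q(V^{u-v}_\eps)\bigr).
\]
This reduces everything to the asymptotics of $Q(V^w_\eps)$ for $w=u\pm v$, with one caveat: the leading orders must be tracked correctly when $u\pm v$ has a different order of vanishing than $u$ or $v$.

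For (i) I would invoke the known expansion $Q(V^w_\eps)=\frac{2\pi w(0)^2}{|\log\eps|}+o(|\log\eps|^{-1})$, valid for every $w\in H^1_0(\Omega)$ that is smooth near $0$. If $w(0)=0$, the known bound is $O(\eps^{2\kappa(w)})=o(|\log\eps|^{-1})$, so the same formula still holds. Applying it to $w=u\pm v$ and polarizing gives \eqref{eq:cap-non-zero}.

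For (ii) and (iii) polarization is less direct, because the two arguments may have different orders. I would prove directly the bilinear statement behind the diagonal estimate. The main tool is the blow-up $V^u_\eps(\eps y)\approx\eps^{\kappa(u)}\Phi^u(y)$, more precisely the description of $V^u_\eps$ in \cite{JST2019}/\cite{JFA2022}. Write $Q(V^u_\eps,V^v_\eps)=\int_{\partial(\eps\omega)}V^v_\eps\,\partial_\nu V^u_\eps$; this holds because $V^u_\eps$ is harmonic in $\Omega_\eps$ and vanishes on $\partial\Omega$. On $\partial(\eps\omega)$ we have $V^v_\eps=v=\eps^{\kappa(v)}v_\sharp(\cdot/\eps)+O(\eps^{\kappa(v)+1})$. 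After rescaling, the normal derivative $\partial_\nu V^u_\eps$ is $\eps^{\kappa(u)-1}$ times $\partial_\nu\Phi^u$ (exterior side) up to a correction. Multiplying by the arclength factor $\eps$ gives
\[
\eps^{\kappa(u)+\kappa(v)}\int_{\partial\omega}v_\sharp\,\partial_\nu\mathsf u+o\left(\eps^{\kappa(u)+\kappa(v)}\right).
\]
It remains to identify this boundary integral with $\int_{\mathbb R^2}\nabla\Phi^u\cdot\nabla\Phi^v$. The contribution from inside $\omega$ is $\int_{\partial\omega}u_\sharp\partial_\nu v_\sharp$, and the exterior contribution comes from Green's formula on $\mathbb R^2\setminus\omega$. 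For $\Phi^u$ to have finite energy, its exterior part must decay like a constant plus $O(1/|x|)$ at infinity; since the total flux of $\mathsf u$ at infinity is zero, the constant at infinity does not contribute.

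In (iii) the exterior profile tends to the constant $\ell_u:=\lim_{x\to\infty}\Phi^u(x)$. The correct global profile of $V^u_\eps$ is then $\eps^{\kappa(u)}\bigl(\mathsf u(x/\eps)-\ell_u\bigr)$ plus a logarithmic capacitary correction $\eps^{\kappa(u)}\ell_u\,\frac{\log|x|\ldots}{\log\eps}$, which carries energy of order $\eps^{\kappa(u)}/|\log\eps|$ when paired with $V^v_\eps\approx v(0)\bigl(1-\frac{\log|x|}{\log\eps}\bigr)$. Computing $Q(V^u_\eps,V^v_\eps)$ as the flux integral $\int_{\partial(\eps\omega)}V^u_\eps\,\partial_\nu V^v_\eps$ puts all the weight on the simple logarithmic profile of $V^v_\eps$. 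That profile has flux $\approx 2\pi v(0)/|\log\eps|$ uniformly across $\partial(\eps\omega)$, up to relative errors. Against $V^u_\eps=u$ on the hole, this yields an integral of $u_\sharp$ against the normalized equilibrium density of $\omega$. That weighted average is exactly $\ell_u$, by the representation of the bounded exterior harmonic function's value at infinity.

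The main obstacle is making the error terms rigorous in (ii) and (iii). One has to control $\partial_\nu V^u_\eps$ on $\partial(\eps\omega)$ beyond leading order, or alternatively estimate the energy of the difference between $V^u_\eps$ and the explicit rescaled profile. I would try the energy route: build an explicit $H^1$ approximant $W^u_\eps$ from cutoffs of the rescaled profile, and use that $V^u_\eps$ minimizes $Q$ among functions equal to $u$ on the hole. This gives $Q(V^u_\eps-W^u_\eps)=o(\text{leading order})$, and the bilinear estimate then follows by Cauchy--Schwarz.
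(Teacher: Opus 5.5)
\textbf{Parts (i) and (ii).} These are acceptable and differ from the paper only in that the paper cites \cite[Equation (21)]{JFA2022} and \cite[Remark 2.16]{JFA2022} directly. Polarizing the diagonal estimate works for (i), since $Q(V^w_\eps)=O(\eps^2)=o(|\log\eps|^{-1})$ when $w\in E$ and $w(0)=0$. In (ii) two bookkeeping points need fixing.
\begin{enumerate}[(a)]
\item $V^u_\eps=u$ is not harmonic in $\eps\omega$, so $Q(V^u_\eps,V^v_\eps)$ is not a boundary integral of the exterior normal derivative alone. The term $\int_{\eps\omega}\nabla u\cdot\nabla v=\eps^{\kappa(u)+\kappa(v)}\int_\omega\nabla u_\sharp\cdot\nabla v_\sharp+o(\eps^{\kappa(u)+\kappa(v)})$ has the same order and must be added.
\item Minimality only gives $Q(V^u_\eps)\le Q(W^u_\eps)$. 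To get $Q(V^u_\eps-W^u_\eps)=Q(W^u_\eps)-Q(V^u_\eps)=o(\eps^{2\kappa(u)})$ you must use the diagonal asymptotics as input.
\end{enumerate}
With these, Cauchy--Schwarz closes (ii), because $\sqrt{Q(V^u_\eps)\,Q(V^v_\eps)}\asymp\eps^{\kappa(u)+\kappa(v)}$ is exactly the order of the target.

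\textbf{Part (iii): the gap.} The problem is the step you propose for rigor. There $Q(V^u_\eps)\asymp\eps^{2\kappa(u)}$ and $Q(V^v_\eps)\asymp|\log\eps|^{-1}$. Approximants with energy errors that are $o$ of the leading order, combined with Cauchy--Schwarz, only bound the error by $o(\eps^{\kappa(u)}|\log\eps|^{-1/2})$. That scale exceeds the claimed main term $2\pi\ell_u v(0)\eps^{\kappa(u)}/|\log\eps|$ (your $\ell_u=\lim_{x\to\infty}\Phi^u(x)$) by a factor $|\log\eps|^{1/2}$. So the argument cannot even detect the main term. The mixed term is smaller than the geometric mean of the diagonal terms, which no leading-order energy comparison can resolve.

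Two ingredients are missing.
\begin{enumerate}[(1)]
\item Remove the $u$-side error exactly instead of estimating it. Since $V^v_\eps$ is $Q$-orthogonal to $H^1_0(\Omega_\eps)$, $Q(V^u_\eps,V^v_\eps)=Q(W,V^v_\eps)$ for every $W\in H^1_0(\Omega)$ with $W-u\in H^1_0(\Omega_\eps)$. Your flux heuristic implicitly uses this.
\item Describe $V^v_\eps$ near the hole with \emph{relative} error $o(1)$. One form is $-\eps\,\partial_\nu V^v_\eps(\eps y)=\frac{2\pi v(0)}{|\log\eps|}\rho^i_0(y)+o(|\log\eps|^{-1})$ weakly on $\partial\omega$. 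Another is $\|\nabla(V^v_\eps-Z_\eps)\|_{L^2(B_{R\eps})}=o(|\log\eps|^{-1})$ for an explicit profile $Z_\eps$.
\end{enumerate}
An energy bound $o(|\log\eps|^{-1})$ on $V^v_\eps-W^v_\eps$ gives neither. A global approximant would need $Q(V^v_\eps-W^v_\eps)=o(|\log\eps|^{-2})$, which forces it to capture the $O(1)$ constant in the logarithmic denominator.

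\textbf{How the paper does it.} It obtains this finer information from the convergent expansion of \cite[Theorem 2.13]{JFA2022} in powers of $\eps$ and $(r_0+(2\pi)^{-1}\log|\eps|)^{-1}$. It checks three things:
\begin{itemize}
\item all coefficients below order $\eps^{\kappa(u)}$ vanish;
\item $c_{(\kappa(u),0)}=0$, because the exterior profile of $u$ has zero total flux;
\item $c_{(\kappa(u),1)}=-\ell_u v(0)$, using $\tilde v_0=\rho^i_0$ and $\int_{\partial\omega}u_\sharp\rho^i_0\,d\sigma=\ell_u$.
\end{itemize}
These are the identifications your heuristic makes, but with the remainder controlled.
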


\begin{proof} The estimate \eqref{eq:cap-non-zero} follows immediately from \cite[Equation (21)]{JFA2022} and \eqref{eq:cap-uv} is exactly \cite[Remark 2.16]{JFA2022}. 

To prove Estimate \eqref{eq:cap-mixed}, we refer to \cite[Section 2]{JFA2022} in more detail. We first recall that the definition of perforated domains (from Section \ref{sec:intro}) and of the corresponding $u$-capacitary potentials (from Section \ref{sec:finite}) can be extended to a negative $\eps$ close enough to $0$. Indeed, for such an $\epsilon$, we simply set $\Omega_{\eps}=\Omega\setminus |\eps|\,\overline{(-\omega)}$, where $-\omega$ denotes the image of $\omega$ by a symmetry centered at the origin, and we define $V_u^\eps$ accordingly for all $u\in H^1_0(\Omega)$.  In order to recover the notation used in \cite[Section 2]{JFA2022}, we substitute $u^a$, $u^b$ for $u$, $v$; $\overline k^a$, $\overline k^b$ for $\kappa(u)$, $\kappa(v)$ and $\mathrm{Cap}_\Omega(\varepsilon\overline{\omega},u^a,u^b)$ for $Q(V_\eps^u,V_\eps^v)$. According to \cite[Theorem 2.13]{JFA2022},  the following series expansion holds for $\eps\neq0$ in a sufficiently small neighborhood of $0$:
\begin{equation*}
\mathrm{Cap}_\Omega(\varepsilon\overline{\omega},u^a,u^b)=\sum_{n=0}^\infty\varepsilon^n\sum_{l=0}^{n+1} \frac{c_{(n,l)}}{(r_0+(2\pi)^{-1}\log|\varepsilon|)^{l}},
\end{equation*}
where the constant $r_0$ is defined in \cite[Proposition 2.8]{JFA2022} and the coefficients $c_{(n,l)}$ are explicitly constructed in \cite[Theorem 2.13]{JFA2022}. 

One can check that the computations in \cite[Section 2.7]{JFA2022} are also correct in the case where $\overline k^a>0 $ and $\overline k^b=0$, rather than $\overline k^a>0 $ and $\overline k^b>0$. The vanishing of derivatives stated in \cite[Equation (22)]{JFA2022} and the same arguments as in \cite[Section 2.7]{JFA2022} imply that 
\begin{align*}
	& \ c_{(n,0)}=c_{(n,1)}=0 \qquad \forall\, n<\overline k^a,\\
	& \ c_{(n,l)}=0 \qquad \forall\, (n,l)\text{ such that } 2\le l \le n+1 \text{ and } n-l+1<\overline k^a.
\end{align*}   

Moreover, 
\begin{align*}
	c_{(\overline k^a,0)}&=-u^b(0)\,\int_{\partial \omega}  \frac{\partial u^a_{\mathrm{m},\overline{k}^a}}{\partial \nu_{\omega}}\,d\sigma,\\
 	c_{(\overline k^a,1)}&= -\Bigg(\int_{\partial\omega}u^a_{\#,\overline{k}^a}\rho^i_{0}\,d\sigma\Bigg)\Bigg( u^b(0)\,\int_{\partial \omega} \tilde{v}_0
 \, d\sigma\Bigg),
\end{align*}
where $u^a_{\mathrm{m},\overline{k}^a}$ is defined in \cite[Proposition 2.8]{JFA2022}, $u^a_{\#,\overline{k}^a}$ is the principal part at $0$ of $u^a$, {\it i.e.},
\[
u^a_{\#,\overline k^a}(t)\equiv\sum_{\substack{(h,j)\in \mathbb{N}^2\\ h+j=\overline k^a}}\frac{\partial_1^h \partial_2^{j}u^a(0)}{h! j!}t^h_1 t_2^j\qquad\qquad\quad  \forall t\in  \mathbb{R}^2\, ,
\]
(see also Equation \eqref{eq:principal-part}) and $\rho^i_0$ and $\tilde{v}_0$ are respectively defined in \cite[Proposition 2.6]{JFA2022} and \cite[Proposition 2.9]{JFA2022}. We also note that, by \cite[Proposition 2.6]{JFA2022}, we have $\int_{\partial \omega}\rho^i_0\, d\sigma=1$.
As pointed out on page 30 of \cite{JFA2022}, the Divergence Theorem and the decay properties of the radial derivative of $u^a_{\mathrm{m},\overline{k}^a}$ (see \cite[Proposition 2.75]{Fo95}) imply
\[\int_{\partial \omega}  \frac{\partial u^a_{\mathrm{m},\overline{k}^a}}{\partial \nu_{\omega}}\,d\sigma=0.\]
In addition, it is shown on page 31 of \cite{JFA2022}  that $\tilde{v}_0=\rho^i_0$ and in \cite[Proof of Lemma 7.2]{DaMuRo15} that 
\[\int_{\partial\omega}u^a_{\#,\overline{k}^a}\rho^i_{0}\,d\sigma=\lim_{t\to\infty}\mathsf{u}^a_{\overline{k}^a}(t),\]
where $\mathsf{u}^a_{\overline{k}^a}$ is the unique solution to the boundary value problem
\begin{equation*}\label{eq:ulk}
\left\{
\begin{array}{ll}
\Delta \mathsf{u}^a_{\overline{k}^a}=0&\text{in }\mathbb{R}^2\setminus\overline{\omega}\,,\\
\mathsf{u}^a_{\overline{k}^a}=u^{a}_{\#,\overline{k}^a}&\text{on }\partial\omega\,,\\
\mathsf{u}^a_{\overline{k}^a}\text{ is bounded at }\infty\,.
\end{array} 
\right.
\end{equation*}
In the notation used previously, $\mathsf{u}^a_{\overline{k}^a}=\Phi^u$ in $\mathbb R^2\setminus \omega$. We conclude that
\begin{equation*}
	c_{(\overline k^a,0)}=0\quad\text{and}\quad c_{(\overline k^a,1)}=-\left(\lim_{t\to\infty}\mathsf{u}^a_{\overline{k}^a}(t)\right)\,u^b(0).
\end{equation*} 
In particular, this implies
\begin{equation*}
\mathrm{Cap}_\Omega(\varepsilon\overline{\omega},u^a,u^b)=-2\pi\,\left(\lim_{t\to\infty}\mathsf{u}^a_{\overline{k}^a}(t)\right)\,u^b(0)\,\frac{\eps^{\overline k^a}}{\log|\eps|}+o\Bigg(\frac{\eps^{\overline k^a}}{\log|\eps|}\Bigg) \qquad \text{as $\eps\to 0$},
\end{equation*}
which is the desired estimate. \end{proof}
  
  We proceed with the choice of a suitable basis, which relies on the grouping of unperturbed eigenfunctions according to their order of vanishing. As shown in \cite[Appendix A]{JFA2022}, there exists a decomposition of $E$ into an $L^2$-orthogonal sum of non-trivial subspaces
\[E=E_1\oplus \dots \oplus E_p\]
(with $p\ge 1$), associated with a decreasing sequence of integers 
\[k_1>\dots>k_p\ge0,\]
such that, for any $i\in\{1,\dots,p\}$, all non-zero functions in $E_i$ have an order of vanishing at $0$ equal to $k_i$. For future reference, we denote by $m_i$ the dimension of $E_i$, so that 
\[m=m_1+\dots+m_p.\]
In addition, we denote by $\{\ell_j\}_{j=1}^{m}$ the finite sequence of the orders of vanishing $\{k_i\}_{i=1}^p$, counted with the multiplicities $\{m_i\}_{i=1}^p$; that is, we set
\begin{equation}
\label{eq:seq-l}
\begin{aligned}
	\ell_1=\dots=\ell_{m_1}&=k_1,\\
	\ell_{m_1+1}=\dots=\ell_{m_1+m_2}&=k_2,\\
	&\vdots\\
	\ell_{m_1+\dots+m_{p-1}+1}=\dots=\ell_{m}&=k_p.
\end{aligned}
\end{equation}

\begin{rem} \label{rem:mult} As pointed out in \cite[Remark 1.12]{JFA2022}, we always have $m_p=1$ if $k_p=0$ (equivalently, $0$ has multiplicity at most $1$ in the sequence $\{\ell_j\}_{j=1}^m$). Moreover, in the two-dimensional setting we are considering in this note, $m_j\le 2$ for all $1\le j\le p$.\end{rem}

\begin{lem}\label{l:basis}
 There exists an orthonormal basis $\{u_1,\dots,u_m\}$ of $E$ which is adapted to the order decomposition, in the sense that each $u_j$ belongs to some $E_i$.
\end{lem}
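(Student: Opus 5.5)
The lemma follows directly from the orthogonal decomposition $E=E_1\oplus\dots\oplus E_p$ recalled just above from \cite[Appendix A]{JFA2022}. The plan is to pick an orthonormal basis in each summand separately and then put these bases together.

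Fix $i\in\{1,\dots,p\}$. The subspace $E_i$ is a non-trivial subspace of the finite dimensional space $E$, of dimension $m_i$. The scalar product \eqref{eq:L2} restricts to a scalar product on $E_i$. Applying the Gram--Schmidt procedure to any basis of $E_i$ gives an $L^2$-orthonormal basis $\{u^{(i)}_1,\dots,u^{(i)}_{m_i}\}$ of $E_i$. Each of these vectors is non-zero and lies in $E_i$, so its order of vanishing at $0$ is exactly $k_i$.

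Next I concatenate these bases in the order $i=1,\dots,p$:
\[
u_1:=u^{(1)}_1,\ \dots,\ u_{m_1}:=u^{(1)}_{m_1},\quad u_{m_1+1}:=u^{(2)}_1,\ \dots,\quad u_m:=u^{(p)}_{m_p}.
\]
This gives $m=m_1+\dots+m_p$ vectors. Vectors taken from the same $E_i$ are orthonormal by construction. Vectors taken from different summands $E_i$ and $E_{i'}$ are orthogonal, because the decomposition is $L^2$-orthogonal. So the family is orthonormal, hence linearly independent. Since $m=\dim E$, it is a basis of $E$.

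Each $u_j$ belongs to some $E_i$ by construction. With this ordering, $\kappa(u_j)=\ell_j$, where $\ell_j$ is given by \eqref{eq:seq-l}. This identity is the form in which the lemma will be used later.

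The only ingredient that is not routine is the existence of the orthogonal decomposition by order of vanishing. That decomposition is taken from \cite[Appendix A]{JFA2022}, and nothing further needs to be checked here.
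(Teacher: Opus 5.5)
Your proof is correct and follows the paper's route: pick an orthonormal basis of each $E_i$ and concatenate them, using the mutual $L^2$-orthogonality of the $E_i$ to get an orthonormal family of $m=\dim E$ vectors. Your added remark that this ordering gives $\kappa(u_j)=\ell_j$ is consistent with \eqref{eq:seq-l} and with how the basis is used later.
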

Indeed, since each $E_i$ is a finite dimensional subspace of $E$, we can pick an orthonormal basis for it; since the subspaces $E_i$ ($1\le i\le p$) are by assumption mutually orthogonal, the concatenation of these bases gives an orthonormal basis of $E$.    We fix such a basis $\{u_j\}_{j=1}^m$ from now on.

In the case where $k_p>0$ (equivalent to $u_m(0)\neq0)$, we define the $m$-by-$m$ real and symmetric matrix
\begin{equation}\label{eq:G}
    \hat A_0:= \left[\int_{\mathbb R^2}\nabla \Phi^{u_i}\cdot\nabla  \Phi^{u_j}\,dx\right]_{i,j}=[\hat a^{(0)}_{i,j}]_{i,j}.
\end{equation}
To give a geometric interpretation of $\hat A_0$, let us consider the vector space $\mathcal X$, defined as the subspace of $C^0\left(\mathbb R^2\right)$ consisting of the functions $\Phi$ satisfying
\begin{enumerate}[(i)]
	\item $\Phi(0)=0$,
    \item $\Phi$ is harmonic in $\omega$,
    \item $\Phi$ is harmonic in $\mathbb R^2\setminus \overline{\omega}$ and bounded at infinity,
\end{enumerate}
equipped with the bilinear form
\begin{equation}\label{eq:cs}
\langle \Phi,\Psi\rangle_{\mathcal X}:=\int_{\mathbb R^2} \nabla \Phi\cdot\nabla \Psi\,dx.
\end{equation}
Then, $(\mathcal X,\langle\cdot,\cdot\rangle_{\mathcal X})$ is a real inner product space and $\hat A_0$ is the Gram matrix associated with the family $\{\Phi^{u_1},\dots, \Phi^{u_m}\}$.

\begin{lem}\label{l:Ujlinind}
    The functions $\{\Phi^{u_j}\}_{j=1}^m$ are linearly independent.
\end{lem}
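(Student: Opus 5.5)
The plan is to reduce linear independence of $\{\Phi^{u_j}\}_{j=1}^m$ to linear independence of principal parts, and then use the grouping by order of vanishing. We are in the case $k_p>0$, so all $u_j$ vanish at $0$ and every $\Phi^{u_j}$ is defined. Throughout we use that $\Phi^u$ restricted to $\omega$ equals $u_\sharp$.

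Suppose $\sum_{j}c_j\Phi^{u_j}=0$ on $\mathbb R^2$. Restricting to the open set $\omega$ gives $\sum_j c_j (u_j)_\sharp=0$ on $\omega$. Each $(u_j)_\sharp$ is a homogeneous polynomial, and $\omega$ is a nonempty open set, so this identity holds as a polynomial identity on all of $\mathbb R^2$. Split the sum according to the decomposition $E=E_1\oplus\dots\oplus E_p$. The principal parts of basis functions in $E_i$ are homogeneous of degree $k_i$, and the $k_i$ are distinct. Separating homogeneous components of different degrees therefore gives
\[\sum_{j:\,u_j\in E_i}c_j(u_j)_\sharp=0\quad\text{for each } i.\]

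It remains to show that, within a fixed $E_i$, the map $u\mapsto u_\sharp$ is injective on $E_i$. This is the step that needs care, because $u\mapsto u_\sharp$ is not linear on $E$ in general. On $E_i$, however, it coincides with the linear map sending $u$ to its degree-$k_i$ Taylor polynomial at $0$. Indeed, every $u\in E_i$ vanishes to order at least $k_i$, so this degree-$k_i$ part equals $u_\sharp$ when $u\neq0$ and is $0$ when $u=0$. The map is therefore linear on $E_i$.

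Now set $w=\sum_{j:\,u_j\in E_i}c_ju_j\in E_i$. By linearity, $w$ has vanishing degree-$k_i$ Taylor part. If $w\neq0$, the defining property of $E_i$ says $w$ has order of vanishing exactly $k_i$, so its degree-$k_i$ part equals $w_\sharp$. That part is nonzero by definition of the order of vanishing, which is a contradiction. Hence $w=0$, and since the $u_j$ are orthonormal, hence independent, all the corresponding $c_j$ vanish. Doing this for each $i$ gives $c_j=0$ for all $j$, which proves the lemma.

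The main point to handle carefully is this linearity on each $E_i$, together with the observation that the identity on $\omega$ extends to a polynomial identity. Neither needs any information about the exterior part of $\Phi^u$.
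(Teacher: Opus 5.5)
Your proof is correct and follows the paper's argument: restrict the relation to $\omega$, separate the homogeneous components of the distinct degrees $k_1,\dots,k_p$, and use that a nonzero element of $E_i$ has order of vanishing exactly $k_i$ to conclude that each block of coefficients vanishes. Your remark that $u\mapsto u_\sharp$ coincides on $E_i$ with the linear degree-$k_i$ Taylor map makes explicit a step the paper uses implicitly.
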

\begin{proof}
    Suppose that 
    \[
    c_1 \Phi^{u_1} + \ldots + c_m \Phi^{u_m}=0. 
    \]
    The function on the left-hand side, restricted to $\omega$, can be split into a sum of homogeneous polynomials of respective degrees $k_1,\dots, k_p$. Since each of these polynomials must be $0$, we obtain
    \[
    \begin{cases}    
  c_1 (u_1)_\sharp + \ldots + c_{m_1} (u_{m_1})_\sharp=0,\\
    c_{m_1+1} (u_{m_1+1})_\sharp + \ldots + c_{m_1+m_2} (u_{m_1+m_2})_\sharp=0,\\
    \hspace{50pt}\vdots\\
    c_{m_1+m_2+\dots+m_{p-1}+1} (u_{m_1+m_2+\dots+m_{p-1}+1})_\sharp+ \ldots + c_{m}(u_{m})_\sharp=0.
    \end{cases}
    \]
   The linear combination $c_1 u_1 + \ldots + c_{m_1} u_{m_1}$ is a function belonging to $E_1\subset E$. According to the first equation, it has an order of vanishing at $0$ higher than $k_1$, therefore it must be zero by definition of the space $E_1$. Since $u_1,\ldots,u_{m_1}$ are linearly independent, this implies $c_1=\ldots=c_{m_1}=0$. Applying the same argument to the other lines, we obtain $c_1=\dots=c_m=0$.  
\end{proof}

\begin{cor}
    The matrix $\hat A_0$ defined in \eqref{eq:G} is positive-definite.
\end{cor}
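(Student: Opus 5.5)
The corollary follows from Lemma \ref{l:Ujlinind} once we know that $\langle\cdot,\cdot\rangle_{\mathcal X}$ is a genuine inner product on $\mathcal X$, since $\hat A_0$ is the Gram matrix of $\{\Phi^{u_1},\dots,\Phi^{u_m}\}$ in $(\mathcal X,\langle\cdot,\cdot\rangle_{\mathcal X})$. For any $c=(c_1,\dots,c_m)\in\mathbb R^m$ we have, by bilinearity,
\[
c^{T}\hat A_0\,c=\sum_{i,j}c_ic_j\int_{\mathbb R^2}\nabla\Phi^{u_i}\cdot\nabla\Phi^{u_j}\,dx=\int_{\mathbb R^2}\Bigl|\nabla\Bigl(\sum_{j}c_j\Phi^{u_j}\Bigr)\Bigr|^2dx\ \ge 0 .
\]
So $\hat A_0$ is positive semi-definite, and it remains to show that this quantity vanishes only when $c=0$.

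First I check that each $\Phi^{u_j}$ lies in $\mathcal X$ and has finite Dirichlet energy. The hypothesis $k_p>0$ gives $u_j(0)=0$ for all $j$, so $\Phi^{u_j}$ is defined. Inside $\omega$ it is the harmonic polynomial $(u_j)_\sharp$, which vanishes at $0$ because its degree is positive. It is continuous across $\partial\omega$ by construction. Outside $\overline\omega$ it is harmonic and bounded at infinity.

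Finiteness of the energy is the only point needing care. A bounded harmonic function in an exterior planar domain has the form $\alpha+O(|x|^{-1})$, with gradient $O(|x|^{-2})$, so the integral converges at infinity. Near $\partial\omega$, integrability follows from $C^{1,\alpha}$ boundary regularity, since the boundary data are polynomial; alternatively, it is implicit in the fact that $\hat A_0$ appears in \eqref{eq:cap-uv}.

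Now suppose $c^T\hat A_0c=0$ and set $\Phi=\sum_j c_j\Phi^{u_j}$. Then $\nabla\Phi=0$ almost everywhere on $\mathbb R^2$. Since $\Phi$ is continuous, and smooth on both $\omega$ and $\mathbb R^2\setminus\overline\omega$, it is constant on each of these open sets; continuity across $\partial\omega$ then makes it constant on all of $\mathbb R^2$, where I use that $\omega$ is connected and that $\mathbb R^2\setminus\overline\omega$ is connected because $\omega$ is simply connected and bounded. The normalization $\Phi(0)=0$ forces $\Phi\equiv0$. Lemma \ref{l:Ujlinind} then gives $c=0$, so $\hat A_0$ is positive-definite.

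The only mildly delicate step is the definiteness of $\langle\cdot,\cdot\rangle_{\mathcal X}$, namely that vanishing energy plus $\Phi(0)=0$ gives $\Phi\equiv0$. The argument above handles it, and the rest is routine.
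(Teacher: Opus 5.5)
Your proof is correct and follows the paper's route: the paper notes that $\langle\cdot,\cdot\rangle_{\mathcal X}$ is an inner product on $\mathcal X$, so that $\hat A_0$ is the Gram matrix of the family $\{\Phi^{u_j}\}_{j=1}^m$, which is linearly independent by Lemma \ref{l:Ujlinind}; you spell out the finiteness of the energy and the definiteness of the form, which the paper takes for granted. One small point: connectedness of $\mathbb R^2\setminus\overline\omega$ needs the $C^{1,\alpha}$ regularity of $\omega$, not only boundedness and simple connectedness, but you do not actually need it, since every component of the exterior has its boundary inside $\partial\omega$, where the continuous function $\Phi$ already equals its constant value on $\omega$.
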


 We now gather together the results of this section. Let $A(\eps)=[a_{i,j}(\eps)]_{1\leq i,j\leq m}$ and $B(\eps)=[b_{i,j}(\eps)]_{1\leq i,j\leq m}$ be, respectively, the matrices of $a_\eps(\cdot\,,\cdot)$ and $b_\eps(\cdot\,,\cdot)$ in the basis $\{u_j\}_{j=1}^m$; that is,
\begin{equation}\label{eq:def-AB}
 a_{i,j}(\eps)=a_\eps(u_i,u_j)\quad\text{and}\quad b_{i,j}(\eps)=b_\eps(u_i,u_j).
\end{equation}
\begin{prop} \label{p:entries} The matrices $A(\eps)$ and $B(\eps)$ are real, symmetric, and satisfy the following asymptotic estimates.
\begin{enumerate}[(i)]
	\item If $u_m(0)=0$ (that is, if $k_p>0$, or equivalently if $u(0)=0$ for all $u\in E$), then
	\begin{equation}
	\label{eq:asym1} a_{i,j}(\eps)=\hat a^{(0)}_{i,j}\,\eps^{\ell_i+\ell_j}+o\left(\eps^{\ell_i+\ell_j}\right) \quad\text{ for all }1\le i,j\le m,
	\end{equation}
where $\{\ell_j\}_{j=1}^m$ is a non-increasing sequence of positive integers and $\hat A_0=[\hat a_{i,j}^{(0)}]_{1\le i,j\le m}$ is a real, symmetric and positive-definite matrix.
	\item If $u_m(0)\neq0$ (that is, if $k_p=0$), then
	\begin{align}
	\label{eq:asym2-1} a_{i,j}(\eps)&=\tilde a^{(0)}_{i,j}\,\eps^{\ell_i+\ell_j}+o\left(\eps^{\ell_i+\ell_j}\right) \quad\text{ for all }1\le i,j\le m-1,\\
	\label{eq:asym2-2}	a_{m,j}(\eps)&=a_{j,m}(\eps)=o\left(\frac{\eps^{\ell_j}}{|\log(\eps)|^{1/2}}\right) \quad\text{ for all }1\le j\le m-1,\\
	\label{eq:asym2-3} a_{m,m}(\eps)&=\frac{c}{|\log(\eps)|}+o\left(\frac{1}{|\log(\eps)|}\right)
	\end{align}
where $\{\ell_j\}_{j=1}^m$ is a non-increasing sequence of integers (with $\ell_m=0$ and $\ell_{m-1}>0$), the matrix $\tilde A_0=[\tilde a_{i,j}^{(0)}]_{1\le i,j\le m-1}$ is real, symmetric and positive-definite, and $c$ is a positive constant (explicitly: $c=2\pi\,u_m^2(0)$)
\item In both of the previous cases,
\begin{equation}
	\label{eq:asym3}
	b_{i,j}(\eps)=\delta_{i,j}+o(1)\quad \text{for all } 1\le i,j\le m. 
\end{equation}
\end{enumerate}  
\end{prop}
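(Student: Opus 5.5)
The plan is to substitute the capacity asymptotics of Proposition~\ref{p:cap-estimate} into the expansion \eqref{eq:asym-a} of Lemma~\ref{l:qf-cap}, entry by entry in the basis of Lemma~\ref{l:basis}. The error term $O(\|V_\eps^{u_i}\|\,\|V_\eps^{u_j}\|)$ will then be absorbed using Lemma~\ref{l:V}. Symmetry and reality of $A(\eps)$ and $B(\eps)$ are immediate, since $a_\eps$ and $b_\eps$ are real symmetric bilinear forms. Estimate \eqref{eq:asym3} is just \eqref{eq:asym-b} applied to the orthonormal basis, so only the estimates on $a_{i,j}(\eps)$ need work.

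First, each $u_j$ lies in some $E_i$, so $\kappa(u_j)=\ell_j$ by the definition \eqref{eq:seq-l}. The sequence $\{\ell_j\}$ is non-increasing because $k_1>\dots>k_p$. In case (ii), Remark~\ref{rem:mult} gives $m_p=1$, so only $u_m$ has $u_m(0)\neq0$; hence $\ell_m=0$ and $\ell_{m-1}>0$.

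Next I need the size of the error term. By Lemma~\ref{l:V}, $\|V_\eps^{u}\|^2=o(Q(V_\eps^u))$. The diagonal case of Proposition~\ref{p:cap-estimate} gives $Q(V_\eps^{u_j})=O(\eps^{2\ell_j})$ when $\ell_j>0$, and $Q(V_\eps^{u_m})=O(1/|\log\eps|)$ in case (ii). Therefore $\|V_\eps^{u_j}\|=o(\eps^{\ell_j})$ and $\|V_\eps^{u_m}\|=o(|\log\eps|^{-1/2})$, and $\|V_\eps^{u_i}\|\,\|V_\eps^{u_j}\|$ is $o$ of the product of the corresponding scales.

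For indices with $\ell_i,\ell_j>0$, estimate \eqref{eq:cap-uv} gives $a_{i,j}(\eps)=\eps^{\ell_i+\ell_j}\int\nabla\Phi^{u_i}\cdot\nabla\Phi^{u_j}+o(\eps^{\ell_i+\ell_j})$. This is \eqref{eq:asym1} in case (i) and \eqref{eq:asym2-1} in case (ii). In case (ii), $\tilde A_0$ is the Gram matrix of $\Phi^{u_1},\dots,\Phi^{u_{m-1}}$, which is positive-definite by Lemma~\ref{l:Ujlinind} applied to these $m-1$ functions; the argument there only uses indices with positive order of vanishing. For the $(m,m)$ entry, \eqref{eq:cap-non-zero} gives $a_{m,m}(\eps)=2\pi u_m(0)^2/|\log\eps|+o(1/|\log\eps|)$, and $c=2\pi u_m(0)^2>0$.

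The main obstacle is the mixed entries $a_{m,j}$, because the target bound $o(\eps^{\ell_j}|\log\eps|^{-1/2})$ is stronger than the leading term of \eqref{eq:cap-mixed}. That leading term is of order $\eps^{\ell_j}/|\log\eps|$, which is indeed $o(\eps^{\ell_j}|\log\eps|^{-1/2})$, with the $o$-remainder even smaller. Meanwhile the error in \eqref{eq:asym-a} is $o(\eps^{\ell_j})\cdot o(|\log\eps|^{-1/2})$. Summing the two gives \eqref{eq:asym2-2}.

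Alternatively, the mixed bound follows from Cauchy--Schwarz for the positive form $Q$: $|Q(V_\eps^{u_m},V_\eps^{u_j})|\le Q(V_\eps^{u_m})^{1/2}Q(V_\eps^{u_j})^{1/2}=O(\eps^{\ell_j}|\log\eps|^{-1/2})$. This only gives $O$, however, so the sharper expansion \eqref{eq:cap-mixed} is what yields the little-$o$.
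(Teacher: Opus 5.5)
Your proposal is correct and follows essentially the same route as the paper: you plug the three cases of Proposition \ref{p:cap-estimate} into \eqref{eq:asym-a}, absorb the $O(\|V_\eps^{u_i}\|\,\|V_\eps^{u_j}\|)$ remainder using Lemma \ref{l:V}, and for the mixed entries combine the $O(\eps^{\ell_j}/|\log\eps|)$ leading term of \eqref{eq:cap-mixed} with the $o(\eps^{\ell_j}|\log\eps|^{-1/2})$ remainder. You are slightly more explicit than the paper on two points: you justify $\ell_m=0$ and $\ell_{m-1}>0$ via Remark \ref{rem:mult}, and you justify the positive-definiteness of $\tilde A_0$ by running the argument of Lemma \ref{l:Ujlinind} on $u_1,\dots,u_{m-1}$ only.
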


\begin{proof} The estimate \eqref{eq:asym3} follows from \eqref{eq:asym-b} and from the fact that the basis $\{u_j\}_{j=1}^m$ is $L^2$-orthonormal. 

If $u_i(0)=u_j(0)=0$, $Q(V^{u_i}_\eps)\sim \langle \Phi^{u_i},\Phi^{u_i}\rangle_{\mathcal X}\,\eps^{2\ell_i}$ and $Q(V^{u_j}_\eps)\sim \langle \Phi^{u_j},\Phi^{u_j}\rangle_{\mathcal X}\,\eps^{2\ell_j}$ according to \eqref{eq:cap-uv}. Then, Estimate \eqref{eq:asym-a} and Lemma \ref{l:V}, together with  \eqref{eq:cap-uv}, imply \eqref{eq:asym1} or \eqref{eq:asym2-1}, according to the case $u_m(0)=0$ or $u_m(0)\neq0$. 

If $u_m(0)\neq 0$, then \eqref{eq:cap-non-zero} implies
\[Q(V^{u_m}_\eps)\sim\frac{2\pi\,u_m^2(0)}{|\log(\eps)|},\]
yielding \eqref{eq:asym2-3} when combined with \eqref{eq:asym-a} and Lemma \ref{l:V}. For $1\le i \le m-1$, \eqref{eq:asym2-3} and Lemma \ref{l:V}, together with the estimates \eqref{eq:asym-a} and \eqref{eq:cap-mixed}, give
\[a_\eps(u_j,u_m)=O\left(\frac{\eps^{\ell_j}}{|\log(\eps)|}\right)+o\left(\frac{\eps^{\ell_j}}{|\log(\eps)|^{1/2}}\right)=o\left(\frac{\eps^{\ell_j}}{|\log(\eps)|^{1/2}}\right),\]
which is \eqref{eq:asym2-2}.
\end{proof}
\section{Asymptotic estimates of the eigenvalues}
\label{sec:ev}
\subsection{Statement of the results}

Let us stress that the analysis in this section is completely finite-dimensional. Indeed, we find asymptotic estimates for the eigenvalues of any problem defined using a pair of $m$-by-$m$ matrices $(A(\eps),B(\eps))$ satisfying the assumptions expressed either by \eqref{eq:asym1} and \eqref{eq:asym3} or by \eqref{eq:asym2-1}--\eqref{eq:asym2-3} and \eqref{eq:asym3}. We therefore take care to formulate the statements and the proofs in a way that does not involve any other object or property.

  We use repeatedly the following lemma. 
\begin{lem} \label{l:matrix-GS} Given a real, symmetric and positive-definite $m$-by-$m$ matrix $A$, there exists a unique pair of real $m$-by-$m$ matrices $(D,L)$ such that 
\begin{enumerate}[(i)]
	\item \label{cond:GSdiag} $D$ is diagonal,
	\item \label{cond:GSlower} $L$ is lower-triangular with all diagonal coefficients equal to $1$,
	\item \label{cond:GSprod} $D=L^TAL$.
\end{enumerate} 
Moreover, all the diagonal coefficients of $D$ are strictly positive, and the mapping $A\mapsto (D,L)$ is continuous (for any matrix norm).
\end{lem}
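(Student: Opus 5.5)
This is the $LDL^T$ (Cholesky-type) factorization, rephrased as Gram–Schmidt. I will prove existence, uniqueness, positivity of $D$ and continuity together by induction on $m$, or equivalently through explicit formulas.

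\textbf{Reformulation.} Write $L=[\ell_1|\dots|\ell_m]$ by columns. Condition (ii) says $\ell_j=e_j+\sum_{i>j}L_{ij}e_i$. Condition (iii) says the vectors $\ell_j$ are pairwise $A$-orthogonal, with $D_{jj}=\ell_j^TA\ell_j$. Since $A$ is positive-definite, $\langle x,y\rangle_A:=x^TAy$ is an inner product. The requirement is then exactly that $\ell_m,\ell_{m-1},\dots,\ell_1$ be obtained from $e_m,e_{m-1},\dots,e_1$ by Gram–Schmidt without normalization, processed in reverse order. Indeed, $\ell_j-e_j$ lies in $F_{j+1}:=\mathrm{span}\{e_{j+1},\dots,e_m\}$ and $\ell_j\perp_A F_{j+1}$. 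To check the latter: $F_{j+1}=\mathrm{span}\{\ell_{j+1},\dots,\ell_m\}$ by triangularity, and $\ell_j$ is $A$-orthogonal to each $\ell_i$ with $i>j$.

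\textbf{Existence and uniqueness.} Existence and uniqueness follow from the orthogonal projection theorem in the finite-dimensional inner product space $(\mathbb R^m,\langle\cdot,\cdot\rangle_A)$. The vector $\ell_j$ is forced to be $e_j-\pi_{j+1}e_j$, where $\pi_{j+1}$ is the $A$-orthogonal projection onto $F_{j+1}$, and this choice works. Because $e_j\notin F_{j+1}$, we get $\ell_j\neq0$, so $D_{jj}=\langle\ell_j,\ell_j\rangle_A>0$.

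\textbf{Continuity.} Let $A_{>j}$ denote the lower-right block of $A$ on indices $j+1,\dots,m$, and let $a_{>j}$ be the part of column $j$ below the diagonal. The projection gives the explicit formula
\[
(L_{ij})_{i>j}=-A_{>j}^{-1}a_{>j}.
\]
The block $A_{>j}$ is a principal submatrix, hence positive-definite and invertible. Matrix inversion is continuous on invertible matrices, so $A\mapsto L$ is continuous on the open set of positive-definite matrices. Then $D=L^TAL$ is continuous as well.

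The only point needing care is matching the ordering (lower-triangular $L$ with $D=L^TAL$ means orthogonalizing from the last basis vector backwards). Beyond that there is no real obstacle.
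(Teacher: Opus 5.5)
Your proof is correct and complete, but it follows a different route from the paper's. The paper gives no direct argument. It observes that, up to reversing the order of rows and columns, the statement is the $LDL^T$ variant of the Cholesky factorization, and it quotes existence, uniqueness, positivity of the pivots and continuity from a textbook. This tacitly requires passing from $D=L^TAL$ to $A=L^{-T}DL^{-1}$, where $L^{-T}$ is unit upper triangular; the reversal permutation then turns this into the standard form.

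You instead prove directly the Gram--Schmidt characterization that the paper only asserts in Remark~\ref{r:GS}. Conditions (i)--(iii) are equivalent to $\ell_j\in e_j+F_{j+1}$ and $\ell_j\perp_A F_{j+1}$ for every $j$, so each column is forced to be $e_j$ minus its $A$-orthogonal projection onto $F_{j+1}$.

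Your route buys a self-contained argument and the explicit formula $(L_{ij})_{i>j}=-A_{>j}^{-1}a_{>j}$. This formula gives continuity immediately. Via Cramer's rule, it also shows that the entries of $L$ and $D$ are rational functions of those of $A$. That is precisely the fact the paper invokes later to claim closed-form expressions for the $\hat\mu^{(0)}_{i,j}$, and your formula reproduces the $2\times 2$ formulas \eqref{eq:ex-DL0}. The citation route is shorter but leaves both the translation to the standard factorization and the continuity to the reader.

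Two trivial points you might make explicit: $D$ is unique because (iii) determines it from $L$. Also, ``for any matrix norm'' is automatic, since all norms on a finite-dimensional space are equivalent.
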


\begin{proof}
We observe that, up to flipping the order of lines and columns, the lemma describes the Cholesky factorization of $A$ (or more precisely its LDL variant). The properties are then well known  (see for instance \cite[Lemma 12.1.6]{schatzman}). 
\end{proof}

\begin{rem} \label{r:GS} In practice, the columns of the matrix $L$ are the basis of $\mathbb R^m$ that we obtain by applying the Gram-Schmidt orthogonalization procedure (without normalization) to the standard basis $\{e_i\}_{i=1}^m$ (with $e_i:=(\delta_{i,j})_{j=1}^m)$, starting from $e_m$ and using the scalar product derived from the matrix $A$, i.e. defined by $(x,y)\mapsto x^TAy$. Accordingly, we call $(D,L)$ the \emph{Gram-Schmidt pair} associated with $A$.
\end{rem}

Let us now state the two main results of this section. We assume throughout that $A(\eps)=[a_{i,j}(\eps)]_{1\leq i,j\leq m}$ and $B(\eps)=[b_{i,j}(\eps)]_{1\le i,j\le m}$ are two $m$-by-$m$ real and symmetric matrices depending on the real parameter $\eps>0$ (for $\eps$ small enough) and that $B(\eps)$ satisfies \eqref{eq:asym3}. In particular, this implies that, for $\eps>0$ small enough, $B(\eps)$ is positive-definite and the eigenvalue problem
\begin{equation}\label{eq:probl-old}
A(\eps)\xi = \mu\, B(\eps)\xi, \quad \xi\in \mathbb R^m,   
\end{equation}
has $m$ real eigenvalues, which we denote by $\{\mu_j(\eps)\}_{j=1}^m$ (counted with multiplicities and arranged in non-decreasing order). 

\begin{rem}\label{rem:ev-shifts} Let us note that, in the case where $A(\eps)$ and $B(\eps)$ are the matrices defined by the equations \eqref{eq:def-AB}, it follows from Lemma \ref{lem:shifts-2} that the eigenvalues $\{\mu_j(\eps)\}_{j=1}^m$ of Problem \eqref{eq:probl-old} coincide with the spectral shifts $\{\nu_{N+j-1}(\eps)\}_{j=1}^m$.
\end{rem}

\begin{prop}\label{p:finite-dim}
Let us assume that $A(\eps)$ satisfies \eqref{eq:asym1}. Let us denote by $\{k_i\}_{i=1}^p$ and $\{m_i\}_{i=1}^p$, respectively, the values taken by the finite sequence $\{\ell_j\}_{j=1}^{m}$ and  the corresponding multiplicities (so that these sequences are connected by the relations \eqref{eq:seq-l}). Let us denote the Gram-Schmidt pair associated with $\hat A_0$ by $(\hat D_0,\hat L_0)$, and let us write  $\hat D_0=[d^{(0)}_{i,j}]_{1\le i,j\le m}$ and $\hat L_0=[l^{(0)}_{i,j}]_{1\le i,j\le m}$.  
 
 For each $1\le i\le p$, let us define the $m_i$-by-$m_i$ matrices
\begin{align*}	
	\hat D_i^{(0)}&=[d^{(0)}_{i,j}]_{m_1+\dots+m_{i-1}+1\le i,j\le m_1+\dots+m_{i}},\\
	\hat L_i^{(0)}&=[l^{(0)}_{i,j}]_{m_1+\dots+m_{i-1}+1\le i,j\le m_1+\dots+m_{i}},\\
	\hat Q_i^{(0)}&=\left(\hat L_i^{(0)}\right)^T\hat L_i^{(0)},
\end{align*}
and let us denote by $\{\hat \mu_{i,j}^{(0)}\}_{j=1}^{m_i}$ the eigenvalues of the problem
\begin{equation}
	\label{eq:ev-block}
	\hat D_i^{(0)}\xi=\mu\,\hat Q_i^{(0)},\quad \xi\in \mathbb R^{m_i},
\end{equation} 
counted with multiplicities and arranged in non-decreasing order.

Then, we have the asymptotic estimates
\begin{equation}\label{eq:linalg1}
\mu_{m_1+\dots+m_{i-1}+j}(\eps)=\hat \mu_{i,j}^{(0)}\,\eps^{2 k_i} + o(\eps^{2k_i}) \quad\text{for all }1\le i\le p\text{ and }1\le j\le m_i.
\end{equation}
\end{prop}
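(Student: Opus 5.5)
The idea is to factor the scales $\eps^{\ell_j}$ out of $A(\eps)$ and then apply Lemma \ref{l:matrix-GS} to what is left. This turns \eqref{eq:probl-old} into a problem whose left-hand matrix is exactly diagonal, with entries of order $\eps^{2k_i}$, and whose right-hand matrix converges to a block-diagonal limit. Let $S_\eps:=\mathrm{diag}(\eps^{\ell_1},\dots,\eps^{\ell_m})$. By \eqref{eq:asym1} we can write $A(\eps)=S_\eps \hat A(\eps) S_\eps$ with $\hat A(\eps)=\hat A_0+o(1)$. For $\eps$ small, $\hat A(\eps)$ is positive-definite. Let $(D_\eps,L_\eps)$ be its Gram-Schmidt pair; by the continuity part of Lemma \ref{l:matrix-GS}, $D_\eps\to\hat D_0$ and $L_\eps\to\hat L_0$. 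Then $A(\eps)=S_\eps L_\eps^{-T}D_\eps L_\eps^{-1}S_\eps$.

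Next I change variables by $\xi=M_\eps\eta$ with $M_\eps:=S_\eps^{-1}L_\eps S_\eps$. Then $L_\eps^{-1}S_\eps\xi=S_\eps\eta$, so \eqref{eq:probl-old} becomes
\[
S_\eps D_\eps S_\eps\,\eta=\mu\, M_\eps^T B(\eps) M_\eps\,\eta .
\]
This is the same generalized eigenvalue problem, since $M_\eps$ is invertible. The matrix $M_\eps$ is lower triangular with entries $(M_\eps)_{ij}=\eps^{\ell_j-\ell_i}(L_\eps)_{ij}$ for $i\ge j$. Because $\{\ell_j\}$ is non-increasing, $\ell_j\ge\ell_i$ whenever $j\le i$. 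So the entries with $\ell_j>\ell_i$ tend to $0$, and those inside a block (where $\ell_i=\ell_j$) converge to the corresponding entries of $\hat L_0$. Hence $M_\eps\to K:=\mathrm{blockdiag}(\hat L^{(0)}_1,\dots,\hat L^{(0)}_p)$. Together with \eqref{eq:asym3}, this gives $R(\eps):=M_\eps^TB(\eps)M_\eps\to K^TK=\mathrm{blockdiag}(\hat Q^{(0)}_1,\dots,\hat Q^{(0)}_p)$, which is positive-definite. The left-hand matrix is $\Delta_\eps:=S_\eps D_\eps S_\eps$, which is diagonal, equal to $\eps^{2k_i}(\hat D^{(0)}_i+o(1))$ on block $i$.

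It remains to locate the eigenvalues of the pair $(\Delta_\eps,R(\eps))$. Here $\Delta_\eps$ is diagonal with widely separated scales, while $R(\eps)$ is only asymptotically block-diagonal, with $o(1)$ off-diagonal couplings. This is the step I expect to be the main obstacle: an $o(1)$ coupling between block $i$ and a much smaller-scale block $i'>i$ must not contaminate the $\eps^{2k_i}$ eigenvalues beyond $o(\eps^{2k_i})$. I would use the min-max principle for the Rayleigh quotient $\eta^T\Delta_\eps\eta/\eta^TR(\eps)\eta$, and write $n_i:=m_1+\dots+m_i$.

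For the upper bound on $\mu_{n_{i-1}+j}$, I take test subspaces spanned by all coordinates of blocks $i+1,\dots,p$ together with the first $j$ limiting eigenvectors of $(\hat D^{(0)}_i,\hat Q^{(0)}_i)$ placed in block $i$. On such $\eta$, the numerator is at most $\eps^{2k_i}(\hat\mu^{(0)}_{i,j}+o(1))$ times the squared norm of the block-$i$ component of $\eta$, plus $O(\eps^{2k_{i+1}})|\eta|^2$. The denominator is comparable to $|\eta|^2$, and its $o(1)$ couplings can be absorbed because $R(\eps)$ is uniformly positive-definite; I expect this direction to be routine. For the lower bound, I apply max-min: I restrict to the vectors $R(\eps)$-orthogonal to the first $j-1$ eigenvectors in block $i$, and I split $\eta$ into its components in blocks $1,\dots,i-1$, block $i$, and blocks $i+1,\dots,p$. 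Components in blocks $1,\dots,i-1$ carry weight at least $c\,\eps^{2k_{i-1}}\gg\eps^{2k_i}$, and components in blocks $i+1,\dots,p$ contribute nonnegatively to the numerator. A Schur-complement argument on $R(\eps)$ should then show that these components change the quotient by at most $o(\eps^{2k_i})$. An equivalent alternative for this step is to work with the inverse problem $\Delta_\eps^{-1/2}R(\eps)\Delta_\eps^{-1/2}$ and apply the standard block-perturbation (Schur complement) estimate for widely separated scales. Either way, the result is $\mu_{n_{i-1}+j}(\eps)=\hat\mu^{(0)}_{i,j}\eps^{2k_i}+o(\eps^{2k_i})$, which is \eqref{eq:linalg1}.
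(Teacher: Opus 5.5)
Your reduction is exactly the paper's. Your $M_\eps=S_\eps^{-1}L_\eps S_\eps$, $\Delta_\eps=S_\eps D_\eps S_\eps$ and $R(\eps)=M_\eps^TB(\eps)M_\eps$ are the paper's $L(\eps)$, $D(\eps)$ and $Q(\eps)$. Your limits $M_\eps\to K$ and $R(\eps)\to Q_0:=\mathrm{blockdiag}(\hat Q^{(0)}_1,\dots,\hat Q^{(0)}_p)$ are also correct.

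The gap is in the last step, which you only sketch and which, as sketched, has the scales reversed. Since $\{\ell_j\}$ is non-increasing, $k_1>\dots>k_p$. So blocks $1,\dots,i-1$ sit at the \emph{smaller} scales $\eps^{2k_1}\ll\dots\ll\eps^{2k_{i-1}}\ll\eps^{2k_i}$, and blocks $i+1,\dots,p$ sit at the larger ones.
\begin{itemize}
\item Your upper-bound test space (all of blocks $i+1,\dots,p$ plus $j$ vectors in block $i$) has dimension $m-n_i+j$ rather than $n_{i-1}+j$. On it the numerator contains terms of size $\eps^{2k_{i+1}}\gg\eps^{2k_i}$. So it gives no bound $\mu_{n_{i-1}+j}(\eps)\le(\hat\mu^{(0)}_{i,j}+o(1))\eps^{2k_i}$.
\item In the max-min step, your claim that components in blocks $1,\dots,i-1$ carry weight $\gg\eps^{2k_i}$ is false. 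They carry weight $O(\eps^{2k_{i-1}})=o(\eps^{2k_i})$ and \emph{lower} the quotient, so they must be constrained away too. You need $n_{i-1}+j-1$ constraints, not $j-1$; with $j-1$ constraints, max-min only bounds $\mu_j(\eps)$ from below.
\end{itemize}
The argument can be repaired by swapping the roles of earlier and later blocks. Even then, the Schur-complement estimate meant to control the $o(1)$ couplings is left as ``should then show''.

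More to the point, the step you flag as the main obstacle disappears once you compare quadratic forms multiplicatively, which is what the paper does. Let $\check D(\eps):=\mathrm{blockdiag}(\eps^{2k_1}\hat D^{(0)}_1,\dots,\eps^{2k_p}\hat D^{(0)}_p)$.
\begin{itemize}
\item Both $\Delta_\eps$ and $\check D(\eps)$ are diagonal with positive entries whose ratios tend to $1$. Hence $(1-\delta(\eps))\check D(\eps)\le\Delta_\eps\le(1+\delta(\eps))\check D(\eps)$ with $\delta(\eps)\to0$.
\item Since $Q_0$ is positive-definite and $R(\eps)-Q_0=o(1)$, also $(1-\theta(\eps))Q_0\le R(\eps)\le(1+\theta(\eps))Q_0$ with $\theta(\eps)\to0$.
\end{itemize}
Therefore the Rayleigh quotients of $(\Delta_\eps,R(\eps))$ and $(\check D(\eps),Q_0)$ agree up to a factor in $[\frac{1-\delta}{1+\theta},\frac{1+\delta}{1-\theta}]$, uniformly in the vector. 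Min-max then yields $\mu_k(\eps)=(1+o(1))\check\mu_k(\eps)$ for every $k$, where $\check\mu_k(\eps)$ denotes the $k$-th eigenvalue of the pair $(\check D(\eps),Q_0)$. This pair is exactly block-diagonal, so its eigenvalues are the numbers $\hat\mu^{(0)}_{i,j}\eps^{2k_i}$. For small $\eps$ these are ordered lexicographically in $(i,j)$, which gives \eqref{eq:linalg1}.

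The $o(1)$ couplings between widely separated blocks are harmless for two reasons. They sit only in the mass matrix, and they are small \emph{relative} to the uniformly positive-definite $Q_0$, so they rescale each eigenvalue by $1+O(\theta)$ whatever its scale. The numerator has no off-diagonal part at all, which is precisely what the Gram--Schmidt change of variables buys.
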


\begin{rem} The matrices $\hat D_i$ and $\hat L_i$ are constructed from $\hat D$ and $\hat L$ by extracting successive $m_i$-by-$m_i$ diagonal blocks. \end{rem}

\begin{prop}\label{p:finite-dim-non-zero}
 Let us assume that $A(\eps)$ satisfies \eqref{eq:asym2-1}, \eqref{eq:asym2-2} and \eqref{eq:asym2-3}. Let us denote by $\{k_i\}_{i=1}^p$ and $\{m_i\}_{i=1}^p$ the sequences associated with $\{\ell_j\}_{j=1}^m$, as in Proposition \ref{p:finite-dim}. Let us denote the Gram-Schmidt pair associated with $\tilde A_0$ by $(\tilde D_0,\tilde L_0)$. Let us define, for each $1\le i\le p-1$, the $m_i$-by-$m_i$ matrices $\tilde D_i^{(0)}$, $\tilde L_i^{(0)}$ and $\tilde Q_i^{(0)}$ as well as the eigenvalues $\{\tilde \mu_{i,j}^{(0)}\}_{j=1}^{m_i}$ of the problem
\begin{equation}
	\label{eq:ev-block-non-zero}
	\tilde D_i^{(0)}\xi=\mu\,\tilde Q_i^{(0)},\quad \xi\in \mathbb R^{m_i},
\end{equation} 
as we did for $\hat A^{(0)}$ in Proposition \ref{p:finite-dim}.

Then, we have the asymptotic estimates
\begin{align}
\label{eq:linalg1-non-zero-1} \mu_{m_1+\dots+m_{i-1}+j}(\eps)&=\tilde \mu_{i,j}^{(0)}\,\eps^{2 k_i} + o\left(\eps^{2k_i}\right) \quad\text{for all }1\le i\le p-1\text{ and }1\le j\le m_i,\\
\label{eq:linalg1-non-zero-2}
\mu_{m}(\eps)&=\frac{c}{|\log(\eps)|}+o\left(\frac{1}{|\log(\eps)|}\right).
\end{align}
\end{prop}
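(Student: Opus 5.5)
We prove Proposition \ref{p:finite-dim-non-zero} by reducing the top $(m-1)$-by-$(m-1)$ block to Proposition \ref{p:finite-dim} and treating the last direction as a perturbation. The strategy is a change of basis in $\mathbb R^m$ that decouples the $m$-th coordinate from the others. After this change, the problem splits into a block with scales $\eps^{2k_i}$ ($k_i>0$) and a scalar entry of order $1/|\log\eps|$, up to errors we must control.

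\textbf{Step 1: rescaling.} Set $S(\eps)=\mathrm{diag}(\eps^{-\ell_1},\dots,\eps^{-\ell_{m-1}},|\log\eps|^{1/2})$. The eigenvalue problem \eqref{eq:probl-old} is equivalent to the problem for the pair $(SAS,SBS)$, but $SBS$ is no longer close to the identity. We therefore do not rescale $B$ directly. Instead we work with min-max characterizations, or equivalently with the pair $(A,B)$ in a block form.

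\textbf{Step 2: block elimination (Schur complement).} Write
\[A(\eps)=\begin{pmatrix} A_{11} & a\\ a^T & a_{mm}\end{pmatrix}.\]
Use the $A$-orthogonal change of basis $M=\begin{pmatrix} I & 0\\ -a^T A_{11}^{-1} & 1\end{pmatrix}$, or its transpose, chosen so that $M^TAM=\mathrm{diag}(A_{11},\,a_{mm}-a^TA_{11}^{-1}a)$. Two estimates control this step.
\begin{itemize}
\item By \eqref{eq:asym2-1}, $A_{11}=S_1^{-1}(\tilde A_0+o(1))S_1^{-1}$ with $S_1=\mathrm{diag}(\eps^{-\ell_j})$. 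Hence $A_{11}^{-1}=S_1(\tilde A_0^{-1}+o(1))S_1$.
\item By \eqref{eq:asym2-2}, $S_1a=o(|\log\eps|^{-1/2})$ entrywise. Hence $a^TA_{11}^{-1}a=o(1/|\log\eps|)$, so the Schur complement is still $c/|\log\eps|+o(1/|\log\eps|)$.
\end{itemize}
The vector $A_{11}^{-1}a=S_1(\tilde A_0^{-1}+o(1))S_1a$ has entries of order $o(\eps^{-\ell_j}|\log\eps|^{-1/2})$. This is not obviously small, and is the expected main obstacle: $M^TBM$ may then fail to stay close to the identity.

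To handle it, I would instead eliminate in the other direction, which is the Gram–Schmidt ordering of Remark \ref{r:GS}, starting from $e_m$. Replace $e_j$ ($j<m$) by $e_j-(a_{jm}/a_{mm})e_m$. The coefficient satisfies $a_{jm}/a_{mm}=o(\eps^{\ell_j}|\log\eps|^{1/2})=o(1)$, since $\ell_j\ge1$. So the transformation $M$ tends to the identity, and $M^TBM=I+o(1)$ still satisfies \eqref{eq:asym3}. The new upper block is $A_{11}-aa^T/a_{mm}$. Its $(i,j)$ entry is perturbed by $o(\eps^{\ell_i+\ell_j}|\log\eps|^{-1})\cdot|\log\eps|=o(\eps^{\ell_i+\ell_j})$, so the upper block still satisfies \eqref{eq:asym2-1} with the same $\tilde A_0$. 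The new pair is thus block diagonal in $A$, with $B$ equal to $I+o(1)$.

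\textbf{Step 3: conclusion.} It remains to show that the eigenvalues of a pair $(\mathrm{diag}(A'_{11},\alpha),B')$ with $B'=I+o(1)$ have the claimed asymptotics. For the top $m-1$ eigenvalues, I would apply min-max together with the scale separation: any eigenvalue of order $\eps^{2k_i}$ is much smaller than $\alpha\sim c/|\log\eps|$. Concretely, compare with the pair $(A'_{11},B'_{11})$ by restricting test spaces to $\mathrm{span}(e_1,\dots,e_{m-1})$ for upper bounds. For lower bounds, I would invoke Proposition \ref{p:finite-dim} on the reduced pair, after using a Schur-type argument in $B'$, namely a form inequality $A'\ge(1-\delta)\,\cdot$ relative to $B'$ in block form. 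Proposition \ref{p:finite-dim} yields \eqref{eq:linalg1-non-zero-1}, its proof being independent of whether $\ell_j>0$. For $\mu_m$, the trace and determinant identities give $\prod\mu_j=\det A'/\det B'$. Hence
\[\mu_m=\frac{\alpha\det A'_{11}}{\det B'\prod_{j<m}\mu_j}=\alpha\,(1+o(1)),\]
using $\det A'_{11}/\prod_{j<m}\mu_j\to1$, which follows from the block result and $\det B'\to1$. This gives \eqref{eq:linalg1-non-zero-2}.
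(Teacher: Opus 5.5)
This is essentially the paper's proof. Once you drop the Schur-complement attempt, your replacement $e_j\mapsto e_j-(a_{jm}/a_{mm})e_m$ is exactly the first Gram--Schmidt step from $e_m$ used there, with the same estimates ($\alpha^{(m,j)}_\eps=o(\eps^{\ell_j}|\log\eps|^{1/2})=o(1)$, $B'\to I$, and an $o(\eps^{\ell_i+\ell_j})$ correction to the upper block), followed by Proposition~\ref{p:finite-dim} on that block.

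Your Step~3 lower bound is left unfinished, but it and the determinant trick for $\mu_m$ are both covered by the paper's one-line comparison. Since $(1-\theta)I\le B'\le(1+\theta)I$ with $\theta\to0$, min-max places each $\mu_j(\eps)$ between $(1+\theta)^{-1}$ and $(1-\theta)^{-1}$ times the $j$-th eigenvalue of the positive-definite block-diagonal matrix $A'=\mathrm{diag}(A'_{11},\alpha)$, whose spectrum is that of $A'_{11}$ together with $\alpha$.
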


\subsection{Proof of Proposition \ref{p:finite-dim}}

Let us define the matrices 
\begin{align*}
	P(\eps)&:={\rm diag}\left(\eps^{-\ell_1},\dots,\eps^{-\ell_m}\right),\\
	\hat A(\eps)&:=P(\eps)^TA(\eps)P(\eps)=P(\eps) A(\eps)P(\eps).
\end{align*}
It follows from Assumption \eqref{eq:asym1} that $\hat A(\eps)\to \hat A_0$. Since $\hat A_0$ is, by hypothesis, positive-definite, this convergence implies that $\hat A(\eps)$, and therefore also $A(\eps)$, is positive definite for $\eps>0$ small enough. Let us denote by $(\hat D(\eps),\hat L(\eps))$ the Gram-Schmidt pair associated with $\hat A(\eps)$. By continuity (see Lemma \ref{l:matrix-GS}), $\hat D(\eps)\to \hat D_0$ and $\hat L(\eps)\to \hat L_0$.  

We now denote by $(D(\eps),L(\eps))$ the Gram-Schmidt pair associated with $A(\eps)$.  We have
\begin{align*}
P(\eps) D(\eps) P(\eps)&=P(\eps)L(\eps)^TA(\eps)L(\eps)P(\eps)\\
&=P(\eps)L(\eps)^T\,P(\eps)^{-1}\,P(\eps)A(\eps)P(\eps)\,P(\eps)^{-1}L(\eps)P(\eps)\\
&=\left(P(\eps)^{-1}L(\eps)P(\eps)\right)^T{\hat A(\eps)}\, P(\eps)^{-1}L(\eps)P(\eps).
\end{align*}
where $P(\eps)D(\eps)P(\eps)$ is diagonal and $P(\eps)^{-1}L(\eps)P(\eps)$ is lower-triangular with all diagonal coefficients equal to $1$. It follows from the uniqueness of the Gram-Schmidt pair that $\hat D(\eps)= P(\eps)L(\eps)P(\eps)$ and $\hat L(\eps)= P(\eps)^{-1}L(\eps)P(\eps)$. We finally obtain
\begin{align}
\label{eq:D} D(\eps)&=P(\eps)^{-1}\hat D(\eps) P(\eps)^{-1},\\
\label{eq:L} L(\eps)&=P(\eps)\hat L(\eps)P(\eps)^{-1}.
\end{align}

Let us use the notation $D(\eps)={\rm diag}\,(d_1(\eps),\dots,d_m(\eps))$ and $L(\eps)=[l_{i,j}(\eps)]_{1\le i,j\le m}$.  Equation \eqref{eq:D} and the limit $\hat D(\eps)\to \hat D_0$ imply  
\begin{equation}
\label{eq:asym-d}
	d_j(\eps)=d_{j,j}^{(0)}\,\eps^{2\ell_j}+o\left(\eps^{2\ell_j}\right)\quad \text{for all }1\le j\le m.
\end{equation} 
Since $l_{i,j}(\eps)=0$ whenever $j>i$ and since Equation \eqref{eq:L} is equivalent to the family of identities
\[l_{i,j}(\eps)=\eps^{\ell_j-\ell_i}\hat l_{i,j}(\eps)\quad\text{ for all }1\le i,j\le m,\]
the limit $\hat L(\eps)\to \hat L_0$ implies (using the fact that $\{\ell_j\}_{j=1}^m$ is non-increasing)
\begin{equation*}
l_{i,j}(\eps)\to
\begin{cases}
	\hat l_{i,j}^{(0)} &\mbox{ if }\ell_i=\ell_j,\\
	0 &\mbox{ if }\ell_i\neq\ell_j.
\end{cases}
\end{equation*}
This is equivalent to $L(\eps)\to L_0$, with $L_0$ the $m$-by-$m$ block-diagonal matrix
\[\left[
	\begin{array}{cccc}
	\hat L_1^{(0)}&0&\cdots&0\\
	0&\hat L_2^{(0)}&\cdots&0\\
	\vdots&\vdots&\ddots&\vdots\\
	0&0&\cdots&\hat L_p^{(0)}
	\end{array}
	\right].\]

We now observe that the eigenvalues of Problem \eqref{eq:probl-old} are the same as those of the problem
\begin{equation}
\label{eq:probl-new}
	D(\eps)\xi=\mu\, Q(\eps)\xi,\quad \xi\in \mathbb{R}^m,
\end{equation}
with $Q(\eps)=L(\eps)^TB(\eps)L(\eps)$. Indeed, let us define two symmetric bilinear forms on $\mathbb R^m$ by
\begin{equation}
\label{eq:formsRm}
	a_\eps(\xi,\zeta):=\xi^TA(\eps)\zeta\quad\text{and\quad}b_\eps(\xi,\zeta):=\xi^TB(\eps)\zeta
\end{equation} 
(we chose the notation to be consistent with Section \ref{sec:finite}). Then, the eigenvalue problem: 
\begin{equation}
\label{eq:probl-forms}
	\text{find }(\mu,\xi)\in \mathbb R\times\mathbb R^m\text{ such that}\quad	a_\eps(\xi,\zeta)=\mu\,b_\eps(\xi,\zeta)\quad\text{for all }\zeta\in\mathbb R^m
\end{equation}
has the matrix form \eqref{eq:probl-old} when written in the standard basis $\{e_j\}_{j=1}^m$ and the matrix form \eqref{eq:probl-new} when written in the basis formed by the columns of $L(\eps)$. All these problems therefore have the same eigenvalues.

The limit $L(\eps)\to L_0$ and Assumption \eqref{eq:asym3} imply that $Q(\eps)\to Q_0$, with $Q_0$ the block-diagonal matrix
\[\left( L_0\right)^T L_0=\left[
	\begin{array}{cccc}
	\hat Q_1^{(0)}&0&\cdots&0\\
	0&\hat Q_2^{(0)}&\cdots&0\\
	\vdots&\vdots&\ddots&\vdots\\
	0&0&\cdots&\hat Q_p^{(0)}
	\end{array}
	\right].\]
 
Let us now define the block-diagonal matrix
\[\check D(\eps)=\left[
	\begin{array}{cccc}
	 \eps^{2k_1}\hat D_1^{(0)}&0&\cdots&0\\
	0&\eps^{2k_2}\hat D_2^{(0)}&\cdots&0\\
	\vdots&\vdots&\ddots&\vdots\\
	0&0&\cdots&\eps^{2k_p}\hat D_p^{(0)}
	\end{array}
	\right]\]
and denote by $\{\check  \mu_{j}(\eps)\}_{j=1}^m$ the eigenvalues of the problem 
\begin{equation}\label{eq:probl-limit}
\check  D(\eps)\xi =\mu\, Q_0\xi,\quad \xi\in\mathbb R^m,
\end{equation}
arranged in non-decreasing order and counted with multiplicities.

It follows from the block-diagonal structure of $\check D(\eps)$ and $Q_0$ that the values in the sequence $\{\check \mu_{j}(\eps)\}_{j=1}^m$ (counted with multiplicities) coincide with the values in the sequence
\[\left\{\hat \mu_{i,j}^{(0)}\,\eps^{2k_i}\,:\,1\le i\le p\mbox{ and }1\le j\le m_i\right\}.\]
Moreover, for $\eps$ small enough, the values in the second sequence are non-decreasing with respect to the lexicographic ordering of the indices $(i,j)$.

The estimates \eqref{eq:asym-d} and the limit $Q(\eps)\to Q_0$ imply the existence of positive functions $\eta(\eps),\, \theta(\eps)$ tending to $0$ such that 
\begin{align*}
&(1-\eta(\eps))\check D(\eps)\le D(\eps)\le (1+\eta(\eps))\check D(\eps)\\
&(1-\theta(\eps))Q_0\le Q(\eps)\le (1+\theta(\eps))Q_0
\end{align*}
(in the sense of the ordering of symmetric matrices). Thus, we can compare the Rayleigh quotients for $\eps>0$ small enough:
\[\frac{1-\eta(\eps)}{1+\theta(\eps)}\,\frac{\xi^T\,\check D(\eps)\,\xi}{\xi^T\,Q_0\,\xi}
\le\frac{\xi^T\,D(\eps)\,\xi}{\xi^T\,Q(\eps)\,\xi}\le\frac{1+\eta(\eps)}{1-\theta(\eps)}\frac{\xi^T\,\check D(\eps)\,\xi}{\xi^T\,Q_0\,\xi} \quad  \text{for all }\xi\in\mathbb R^m\setminus\{0\}.
\]
Using the minmax principle, we deduce that the eigenvalues of Problem \eqref{eq:probl-new} are asymptotically equivalent to the eigenvalues of Problem \eqref{eq:probl-limit}. This implies the asymptotic estimates \eqref{eq:linalg1}.
\subsection{Proof of Proposition \ref{p:finite-dim-non-zero}}

We perform the first step of a Gram-Schmidt orthogonalization with respect to the symmetric bilinear form $a_\eps(\cdot\,,\cdot)$ defined in \eqref{eq:formsRm} (which is positive-definite for $\eps>0$ small enough), starting from $e_m$. More explicitly, we define
\begin{align}\label{eq:secondbranch}
    & f_m^\eps :=e_m,\notag\\
    & f_{j}^\eps := e_{j}- \alpha_\eps^{(m,j)}e_m  \quad \text{for }1\le j\le m-1,
\end{align}
where (using Assumptions \eqref{eq:asym2-2} and \eqref{eq:asym2-3})
\begin{equation}\label{eq:alpha}
\alpha_\eps^{(m,j)}=\dfrac{a_\eps(e_m,e_j)}{a_\eps(e_m)}=\frac{a_{m,j}(\eps)}{a_{m,m}(\eps)}=o\left(\eps^{\ell_j}|\log(\eps)|^{1/2}\right)=o(1).
\end{equation}

In the new basis $\{f_i^\eps\}_{i=1}^m$,
we obtain, using Assumptions \eqref{eq:asym2-1}--\eqref{eq:asym2-3} and the estimate \eqref{eq:alpha},
\begin{align}
a_\eps( f_m^\eps) &= a_\eps(e_m,e_m)=\dfrac{c}{|\log\eps|} + o\left( \dfrac{1}{|\log\eps|} \right), \\
a_\eps(f_m^\eps,f_j^\eps) &=a_\eps(f_j^\eps,f_m^\eps)=0  \quad \text{for all  } 1\le j\le m-1,\\
 a_\eps( f_{i}^\eps,f_j^\eps) &= a_\eps(e_{i},e_j) - \alpha_\eps^{(m,j)} a_\eps(e_{i},e_m) - \alpha_\eps^{(m,i)}a_\eps(e_m,e_j)+\alpha_\eps^{(m,j)}\alpha_\eps^{(m,i)}a_\eps(e_m,e_m)\notag\\
 	&=
    \tilde a^{(0)}_{i,j}\,\eps^{\ell_i+\ell_j} + o\left(\eps^{\ell_i+\ell_j}\right) \quad \text{for all }1\le i,j\le m-1. \label{eq:asym-GS}
\end{align}
In addition, since $\alpha_\eps^{(m,j)}=o(1)$,
\begin{equation*}
b_\eps(f_i^\eps,f_j^\eps)=\delta_{i,j}+o(1)\quad\text{for all }1\le i,j \le m.
\end{equation*}
Let us denote by $A_1(\eps)$ and $B_1(\eps)$ the matrices  of $a_\eps(\cdot\,,\cdot)$ and $b_\eps(\cdot\,,\cdot)$, respectively, in the basis $\{f_i^\eps\}_{i=1}^m$. Then,  Problem \eqref{eq:probl-old} has the same eigenvalues as the problem 
\begin{equation}
	\label{eq:matrix-form-nonzero}
    A_1(\eps)\,\xi=\mu\,B_1(\eps)\xi,\quad\xi\in\mathbb R^m.
\end{equation}
The above asymptotic estimates imply that $B_1(\eps)\to I_m$ as $\eps\to0$ and that 
\begin{equation*}
    A_1(\eps)=
    \left[
    \begin{array}{rc}
        \tilde A(\eps)&\begin{array}{c}0\\ \vdots \\ 0\end{array}\\
        0\,\cdots\, 0&\frac{c}{|\log(\eps)|}+o\left(\frac{1}{|\log(\eps)|}\right)
    \end{array}
    \right]
\end{equation*}
with $\tilde A_1(\eps)$ an $(m-1)$-by-$(m-1)$ symmetric matrix. Using the notation $\tilde A(\eps)=[\tilde a_{i,j}(\eps)]_{i,j}$, we have, according to \eqref{eq:asym-GS}, \begin{equation*}
	\tilde a_{i,j}(\eps)=\tilde a^{(0)}_{i,j}\,\eps^{\ell_i+\ell_j}+o\left(\eps^{\ell_i+\ell_j}\right) \quad\text{ for all }1\le i,j\le m-1.
\end{equation*}
 We now apply Proposition \ref{p:finite-dim}, replacing $m$, $A(\eps)$ and $B(\eps)$ in the statement with $m-1$, $\tilde A(\eps)$ and $I_{m-1}$ (the $(m-1)$-by-$(m-1)$ identity matrix), respectively.  We note that the hypotheses of the  proposition are satisfied if we replace  $\{\ell_j\}_{j=1}^{m}$ and $\hat A_0$ in the statement with $\{\ell_j\}_{j=1}^{m-1}$ and $\tilde A_0$. Thus, the eigenvalues $\{\tilde \mu_j(\eps)\}_{j=1}^{m-1}$ of the problem
\begin{equation*}
	\tilde A(\eps)\xi=\mu \,\xi,\quad\xi\in \mathbb R^{m-1}
\end{equation*} 
satisfy
\begin{equation*}	
	\tilde \mu_j(\eps)= \tilde\mu_{i,j}^{(0)}\,\eps^{2\ell_j}+o\left(\eps^{2\ell_j}\right)\quad\text{for any }j=1,\dots,m-1.
\end{equation*}

Estimating the Rayleigh quotient $\xi^T A_1(\eps)\xi/\xi^T B_1(\eps)\xi$, similarly to the end of the proof of Proposition \ref{p:finite-dim}, we obtain
\begin{align*}
\mu_j(\eps)&\sim\tilde \mu_j(\eps)\quad\text{for all }1\le j\le m-1,\\
\mu_m(\eps)&\sim\frac{c}{|\log(\eps)|},
\end{align*}
which implies the asymptotic estimates \eqref{eq:linalg1-non-zero-1} and \eqref{eq:linalg1-non-zero-2}. 

\section{Conclusion and comments}

\subsection{Proof of Theorem \ref{thm:main} and error in Reference \cite{JFA2022}}

We can now complete the proof of Theorem \ref{thm:main}. In case \eqref{thm:case-i}, the first case of Proposition \ref{p:entries} implies that \eqref{eq:asym1} and \eqref{eq:asym3} hold. Then, Lemma \ref{lem:shifts-2} combined with Proposition \ref{p:finite-dim} yields the estimates \eqref{eq:case-i}, where the $\{\hat\nu_j\}_{j=1}^m$ are the terms of the sequence $\{\hat\mu^{(0)}_{i,j}\,;\, 1\le i\le p\text{ and }1\le j\le m_i\}$ arranged according to the lexicographic ordering of the indices $(i,j)$. In case \eqref{thm:case-ii}, the second case of Proposition \ref{p:entries} implies that the estimates \eqref{eq:asym2-1}--\eqref{eq:asym3} hold. Then, Lemma \ref{lem:shifts-2} combined with Proposition \ref{p:finite-dim-non-zero} yields the estimates \eqref{eq:case-ii-1} and \eqref{eq:case-ii-2}, where the $\{\hat\nu_j\}_{j=1}^{m-1}$ are the terms of the sequence $\{\tilde\mu^{(0)}_{i,j}\,;\, 1\le i\le p-1\text{ and }1\le j\le m_i\}$ arranged according to the lexicographic ordering of the indices $(i,j)$ and where $c=2\pi\,u_m^2(0)$. 

Let us note that, in the two dimensional setting of this note, the eigenvalue problems \eqref{eq:ev-block} and \eqref{eq:ev-block-non-zero} have size at most $2$-by-$2$. Thus, there are closed-form expressions for the eigenvalues $\{\hat \mu_{i,j}^{(0)}\}$ involving the entries of $\hat A_0$. Indeed, since the matrices of $\hat D_0$ and $\hat L_0$ are obtained from  $\hat A_0$ using a direct orthogonalization procedure (see Remark \ref{r:GS}), their entries are rational functions of the entries of $\hat A_0$, and the eigenvalues are at worst the roots of quadratic polynomials depending rationally on those entries. 

Let us consider in more detail the case where $m=2$, $p=2$ and $k_2>0$, so that $k_1=\ell_1>\ell_2=k_2>0$. We are then in the first case of Proposition \ref{p:entries}. To simplify notation, let us write the matrix appearing there as
\begin{equation}
\label{eq:ex-A0}
	\hat A_0=
	\left[
	\begin{array}{cc}
		\gamma_1 & \beta\\
		\beta   & \gamma_2\\
	\end{array}
	\right].
\end{equation}
As pointed out above, we obtain the associated Gram-Schmidt pair $(\hat D_0,\hat L_0)$ by applying an orthonormalization procedure (with respect to the scalar product derived from $\hat A_0$), which gives us
\begin{equation}\label{eq:ex-DL0}
 	\hat D_0=
	\left[
	\begin{array}{cc}
		\gamma_1-\frac{\beta^2}{\gamma_2} & 0\\
		0   & \gamma_2\\
	\end{array}
	\right]
	\quad\text{and}\quad
	\hat L_0=
	\left[
	\begin{array}{cc}
		1 & 0\\
		-\frac{\beta}{\gamma_2}   & 1\\
	\end{array}
	\right].
\end{equation}
Then, the matrices $\hat D_1^{(0)}$, $\hat D_2^{(0)}$, $\hat Q_1^{(0)}$ and $\hat Q_2^{(0)}$ appearing in Proposition \eqref{p:finite-dim} are all $1$-by-$1$, with
\begin{equation*}
	D_1^{(0)}=	\left[\gamma_1-\frac{\beta^2}{\gamma_2}\right],\ D_2^{(0)}=\left[\gamma_2\right]\ \text{and}\  Q_1^{(0)}=Q_2^{(0)}=\left[1\right]
\end{equation*} 
so that $\hat\mu_{1,1}^{(0)}=\gamma_1-\frac{\beta^2}{\gamma_2}$ and $\hat\mu_{2,1}^{(0)}=\gamma_2$. We conclude that 
\begin{equation*}
	\lambda_N^\eps=\lambda_N+\left(\gamma_1-\frac{\beta^2}{\gamma_2}\right)\,\eps^{2k_1}+o\left(\eps^{2k_1}\right)\quad\text{and}\quad\lambda_{N+1}^\eps=\lambda_N+\gamma_2\,\eps^{2k_2}+o\left(\eps^{2k_2}\right).
\end{equation*}

On the other hand, the application of Theorem 1.17 as stated in \cite{JFA2022} gives
\begin{equation*}
	\lambda_N^\eps=\lambda_N+\gamma_1\,\eps^{2k_1}+o\left(\eps^{2k_1}\right)\quad\text{and}\quad\lambda_{N+1}^\eps=\lambda_N+\gamma_2\,\eps^{2k_2}+o\left(\eps^{2k_2}\right)
\end{equation*}
evidencing the error in \cite{JFA2022}.

\subsection{Comments on the higher dimensional case in Reference \cite{NA2024}}

The corrections needed in \cite{JFA2022} have  consequences for some of the results in Reference \cite{NA2024}, which deals with higher dimensional cases. There, the eigenvalues (perturbed and unperturbed) are defined, as in Section \ref{sec:intro}, for sets $\Omega$ and $\omega$ in $\mathbb R^d$ (with $d\ge 3$) satisfying the properties in \cite[Definition 1.1]{NA2024} that is, open, bounded, connected, belonging to the class $C^{1,\alpha}$ for some $\alpha>0$, containing $0$ and having a connected complement. The case where the unperturbed eigenvalue $\lambda_N$ has multiplicity $m>1$ is studied in \cite[Section 7.2]{NA2024}. The main result there is Theorem 7.8, but its proof refers to the faulty proof of \cite[Theorem 1.17]{JFA2022}. The coefficients $\{\mu_{j,\ell}\}$ in \cite[Section 7.2]{NA2024} are therefore also incorrect.

It is easy to fix the proof of \cite[Theorem 7.8]{NA2024} using the method of this note. We first observe that the results of Section \ref{sec:finite} also hold for $\mathbb R^d$, with $d\ge3$. So does the decomposition of the unperturbed eigenspace $E$ according to the order of vanishing, and the existence of an adapted basis (the argument following Lemma \ref{l:basis} does not depend on the value of $d$). The asymptotic estimates of Proposition \ref{p:cap-estimate} can be replaced with 
\begin{equation*}
    Q(V^u_\eps,V^v_{\eps})=\eps^{\kappa(u)+\kappa(v)+d-2}\int_{\mathbb R^2}\nabla \Phi^u\cdot\nabla \Phi^v\,dx+o\left(\eps^{\kappa(u)+\kappa(v)+d-2}\right),
\end{equation*}
where, given $u\in E$, $\Phi^u$ is defined as the unique function harmonic in $\mathbb R^d\setminus\overline \omega$, equal to $u_\sharp$ on $\overline \omega$ and vanishing at $\infty$ (see \cite[Theorem 6.1 and Remark 6.2]{NA2024}). The matrices $A(\eps)$ and $B(\eps)$, defined as before by Equations \eqref{eq:def-AB}, therefore satisfy
\begin{align*}
    a_{i,j}(\eps)&=\hat a^{(0)}_{i,j}\,\eps^{\ell_i+\ell_j+d-2}+o\left(\eps^{\ell_i+\ell_j+d-2}\right), \\
    b_{i,j}(\eps)&=\delta_{i,j}+o(1)
\end{align*}
for all $1\le i,j\le m$, with $\hat A_0=[\hat a_{i,j}]_{1\le i,j\le m}$ defined by Equations \eqref{eq:G}.

Finally, we note that Proposition \ref{p:finite-dim} also holds, with an identical proof, when the values of the sequence $\{\ell_j\}_{1\le j\le m}$ are positive real numbers, not necessarily integers. We can therefore apply it after substituting $\{\ell_j+d/2-1\}_{1\le j\le m}$ for $\{\ell_j\}_{1\le j\le m}$ in the statement, and we obtain
\begin{equation}\label{eq:ev-d}
\lambda_{N+m_1+\dots+m_{i-1}+j}^\eps-\lambda_N=\hat\mu^{(0)}_{i,j}\eps^{2k_i+d-2}+o\left(\eps^{2k_i+d-2}\right)
\quad\text{for all }1\le i\le p\text{ and }1\le j\le m_i,
\end{equation}
where the $\{\hat\mu^{(0)}_{i,j}\}$ are defined from $\{m_i\}_{1\le i\le p}$ and $\hat A_0$ as in Proposition \ref{p:finite-dim}. 

We conclude by observing that the previous analysis shows Theorem 1.8 in \cite{NA2024} to be false in general. Indeed (still for $d\ge3$), let us consider an eigenvalue $\lambda_N$ with multiplicity $m=2$ such that $p=2$, so that $\ell_1=k_1>k_2=\ell_2\ge 0$. Let us denote by $\{u_1,u_2\}$ an adapted basis, and let us use, as before, the notation \eqref{eq:ex-A0} for $\hat A_0$. According to the estimates \eqref{eq:ev-d}, 
\begin{equation*}
	\lambda_N^\eps=\lambda_N+\left(\gamma_1-\frac{\beta^2}{\gamma_2}\right)\,\eps^{2k_1+d-2}+o\left(\eps^{2k_1+d-2}\right)\quad\text{and}\quad\lambda_{N+1}^\eps=\lambda_N+\gamma_2\,\eps^{2k_2+d-2}+o\left(\eps^{2k_2+d-2}\right).
\end{equation*}
Let us now assume that \cite[Theorem 1.8]{NA2024} is correct. Then,  there exists an orthonormal basis $\{v_1,v_2\}$ of $E$ such that 
\begin{align*}
	\lambda_N^\eps&=\lambda_N+\eps^{2\kappa(v_1)+d-2}\,\int_{\mathbb R^d}\left|\nabla \Phi^{v_1}\right|^2\,dx+o\left(\eps^{2\kappa(v_1)+d-2}\right),\\
    \lambda_{N+1}^\eps&=\lambda_N+\eps^{2\kappa(v_2)+d-2}\,\int_{\mathbb R^d}\left|\nabla \Phi^{v_2}\right|^2\,dx+o\left(\eps^{2\kappa(v_2)+d-2}\right).
\end{align*}
In particular, this implies that $\kappa(v_1)=k_1$, and therefore that $v_1$ is proportional to $u_1$. Since, by assumption, $v_2$ is orthogonal to $v_1$ and $u_1$ is orthogonal to $u_2$, $v_2$ is proportional to $u_2$, and since all these functions have unit $L^2$-norm , $\{v_1,v_2\}=\{\pm u_1,\pm u_2\}$. It follows that 
\begin{equation*}
	\lambda_N^\eps=\lambda_N+ \gamma_1\,\eps^{2k_1+d-2}+o\left(\eps^{2k_1+d-2}\right)\quad\text{and}\quad\lambda_{N+1}^\eps=\lambda_N+\gamma_2\,\eps^{2k_2+d-2}+o\left(\eps^{2k_2+d-2}\right),
\end{equation*}
which contradicts the previous estimates as soon as $\beta\neq 0$.

\bibliographystyle{acm}
\bibliography{biblio.bib}

@article {JFA2022,
    AUTHOR = {Abatangelo, Laura and L\'ena, Corentin and Musolino, Paolo},
     TITLE = {Ramification of multiple eigenvalues for the
              {D}irichlet-{L}aplacian in perforated domains},
   JOURNAL = {J. Funct. Anal.},
  FJOURNAL = {Journal of Functional Analysis},
    VOLUME = {283},
      YEAR = {2022},
    NUMBER = {12},
     PAGES = {Paper No. 109718, 50},
      ISSN = {0022-1236,1096-0783},
   MRCLASS = {35P20 (31B10 31C15 35B25 35C20)},
  MRNUMBER = {4489278},
       DOI = {10.1016/j.jfa.2022.109718},
       URL = {https://doi.org/10.1016/j.jfa.2022.109718},
}

@article {NA2024,
    AUTHOR = {Abatangelo, Laura and L\'ena, Corentin and Musolino, Paolo},
     TITLE = {Asymptotic behavior of generalized capacities with
              applications to eigenvalue perturbations: the higher
              dimensional case},
   JOURNAL = {Nonlinear Anal.},
  FJOURNAL = {Nonlinear Analysis. Theory, Methods \& Applications. An
              International Multidisciplinary Journal},
    VOLUME = {238},
      YEAR = {2024},
     PAGES = {Paper No. 113391, 34},
      ISSN = {0362-546X,1873-5215},
   MRCLASS = {35P15 (31B10 31C15 35B25 35C20)},
  MRNUMBER = {4648492},
       DOI = {10.1016/j.na.2023.113391},
       URL = {https://doi.org/10.1016/j.na.2023.113391},
}

@article {JST2019,
    AUTHOR = {Abatangelo, Laura and Felli, Veronica and Hillairet, Luc and
              L\'ena, Corentin},
     TITLE = {Spectral stability under removal of small capacity sets and
              applications to {A}haronov-{B}ohm operators},
   JOURNAL = {J. Spectr. Theory},
  FJOURNAL = {Journal of Spectral Theory},
    VOLUME = {9},
      YEAR = {2019},
    NUMBER = {2},
     PAGES = {379--427},
      ISSN = {1664-039X,1664-0403},
   MRCLASS = {35P20 (31C15 35J10 35P15)},
  MRNUMBER = {3950657},
MRREVIEWER = {Rodica\ Luca},
       DOI = {10.4171/JST/251},
       URL = {https://doi.org/10.4171/JST/251},
}

@article {ColboisColin1987,
    AUTHOR = {Colbois, B. and Colin de Verdi\`ere, Y.},
     TITLE = {Sur la multiplicit\'e{} de la premi\`ere valeur propre d'une
              surface de {R}iemann \`a{} courbure constante},
   JOURNAL = {Comment. Math. Helv.},
  FJOURNAL = {Commentarii Mathematici Helvetici},
    VOLUME = {63},
      YEAR = {1988},
    NUMBER = {2},
     PAGES = {194--208},
      ISSN = {0010-2571,1420-8946},
   MRCLASS = {58G25 (11F72 30F99)},
  MRNUMBER = {948777},
MRREVIEWER = {B.\ Helffer},
       DOI = {10.1007/BF02566762},
       URL = {https://doi.org/10.1007/BF02566762},
}

@article {Colin1986,
    AUTHOR = {Colin de Verdi\`ere, Yves},
     TITLE = {Sur la multiplicit\'e{} de la premi\`ere valeur propre non
              nulle du laplacien},
   JOURNAL = {Comment. Math. Helv.},
  FJOURNAL = {Commentarii Mathematici Helvetici},
    VOLUME = {61},
      YEAR = {1986},
    NUMBER = {2},
     PAGES = {254--270},
      ISSN = {0010-2571,1420-8946},
   MRCLASS = {58G25},
  MRNUMBER = {856089},
MRREVIEWER = {G\'erard\ Besson},
       DOI = {10.1007/BF02621914},
       URL = {https://doi.org/10.1007/BF02621914},
}

@article {Courtois1995,
    AUTHOR = {Courtois, Gilles},
     TITLE = {Spectrum of manifolds with holes},
   JOURNAL = {J. Funct. Anal.},
  FJOURNAL = {Journal of Functional Analysis},
    VOLUME = {134},
      YEAR = {1995},
    NUMBER = {1},
     PAGES = {194--221},
      ISSN = {0022-1236,1096-0783},
   MRCLASS = {58G25 (35P15)},
  MRNUMBER = {1359926},
MRREVIEWER = {Stig\ I.\ Andersson},
       DOI = {10.1006/jfan.1995.1142},
       URL = {https://doi.org/10.1006/jfan.1995.1142},
}

@book{schatzman,
 author = {Schatzman, Michelle},
 title = {Numerical analysis. {A} mathematical introduction. {Transl}. from the {French} by {John} {Taylor}},
 isbn = {0-19-850279-6; 0-19-850852-2},
 year = {2002},
 publisher = {Oxford: Clarendon Press},
 language = {English},
 zbMATH = {1667442},
 Zbl = {1019.65003}
}

@book{Fo95,
	Author = {Folland, Gerald B.},
	Edition = {Second},
	Isbn = {0-691-04361-2},
	Mrclass = {35-01},
	Mrnumber = {1357411},
	Pages = {xii+324},
	Publisher = {Princeton University Press, Princeton, NJ},
	Title = {Introduction to partial differential equations},
	Url = {https://mathscinet.ams.org/mathscinet-getitem?mr=1357411},
	Year = {1995}}

@article{DaMuRo15,
	Author = {Dalla Riva, M. and Musolino, P. and Rogosin, S. V.},
	Fjournal = {Asymptotic Analysis},
	Issn = {0921-7134},
	Journal = {Asymptot. Anal.},
	Mrclass = {35J25 (35B25 35C20)},
	Mrnumber = {3371119},
	Mrreviewer = {Jacek Banasiak},
	Number = {3-4},
	Pages = {339--361},
	Title = {Series expansions for the solution of the {D}irichlet problem in a planar domain with a small hole},
	Url = {https://mathscinet.ams.org/mathscinet-getitem?mr=3371119},
	Volume = {92},
	Year = {2015}}

\end{document}